\newtheorem{thm}{Theorem}[section] 
\newtheorem*{thm*}{Theorem} 
\newtheorem{prop}[thm]{Proposition}
\newtheorem{lem}[thm]{Lemma}
\newtheorem{cor}[thm]{Corollary}
\theoremstyle{definition}
\newtheorem{definition}[thm]{Definition}
\newtheorem{expl}[thm]{Example}
\newtheorem{rem}[thm]{Remark}
\DeclareMathOperator{\C}{\mathbb{C}}
\DeclareMathOperator{\Z}{\mathbb{Z}}
\DeclareMathOperator{\F}{\mathbb{F}}
\DeclareMathOperator{\Hom}{{\rm Hom}}
\DeclareMathOperator{\OO}{\mathcal{O}}
    \DeclareFontFamily{U}{wncy}{}
    \DeclareFontShape{U}{wncy}{m}{n}{<->wncyr10}{}
    \DeclareSymbolFont{mcy}{U}{wncy}{m}{n}
    \DeclareMathSymbol{\Sha}{\mathord}{mcy}{"58}
\numberwithin{equation}{section}
\DeclareSymbolFont{bbold}{U}{bbold}{m}{n}
\DeclareSymbolFontAlphabet{\mathbbold}{bbold}
\newcommand{\Ann}{{\rm Ann}}
\newcommand{\I}{\mathcal{I}}
\title{On gcd-graphs over finite rings}
 \author{Tung T. Nguyen, Nguy$\tilde{\text{\^{e}}}$n Duy T\^{a}n }
 \address{Department of Mathematics and Computer Science, Lake Forest College, Lake Forest, Illinois, USA}
 \email{tnguyen@lakeforest.edu}
  \address{
Faculty Mathematics and Informatics, Hanoi University of Science and Technology, 1 Dai Co Viet Road, Hanoi, Vietnam } 
\email{tan.nguyenduy@hust.edu.vn}
\thanks{TTN is partially supported by an AMS-Simons Travel Grant.  NDT is partially supported by the Vietnam National
Foundation for Science and Technology Development (NAFOSTED) under grant number 101.04-2023.21}
\keywords{Gcd graphs, Finite rings, Ramanujan sums, Integral graphs.}
\subjclass[2020]{Primary 05C25, 11L05, 13A70, 13M05}
\begin{document}

\maketitle

\begin{abstract}
Gcd-graphs represent an interesting and historically important class of integral graphs. Since the pioneering work of Klotz and Sander, numerous incarnations of these graphs have been explored in the literature. In this article, we define and establish some foundational properties of gcd-graphs defined over a general finite commutative ring. In particular, we investigate the connectivity and diameter of these graphs. Additionally, when the ring is a finite symmetric $\Z/n$-algebra, we give an explicit description of their spectrum using the theory of Ramanujan sums that gives a unified treatment of various results in the literature. 

\end{abstract}
\maketitle
\section{Introduction and motivations}
Gcd-graphs are an interesting and historically important class of integral graphs; i.e., graphs whose eigenvalues are integers. These graphs are first introduced by Klotz and Sander in \cite{klotz2007some}. To set the context, let us briefly recall their definition. Let $n$ be a positive integer and $D$ a subset of proper divisors of $n$. The gcd-graph $G_n(D)$ is defined as follows: (1) The vertices of $G_n(D)$ are elements of the finite ring $\Z/n$ and (2) two vertices $a,b$ are adjacent if $\gcd(a-b, n) \in D.$ Using the theory of Ramanujan sums, one can describe the spectrum of the gcd-graph $G_n(D)$  explicitly (see \cite[Section 4]{klotz2007some}). More precisely, its eigenvalues are indexed by elements of $\Z/n$; namely $(\lambda_m)_{m \in \Z/n}$ where 
\begin{equation} \label{eq:ramanjuan_1}
\lambda_m = \sum_{d \in D} c(m,n/d),
\end{equation}
and 
\begin{equation} \label{eq:ramanujan_2}
c(m, n/d)=  \mu(t) \dfrac{\varphi(n/d)}{\varphi(t)}, \quad \text{where} \quad t = \dfrac{n/d}{\gcd(n/d,m)}.
\end{equation}
Here $\mu$ and $\varphi$ are respectively the M\"obius and Euler totient functions. A direct corollary of this explicit description is that gcd-graphs are integral. In \cite{so2006integral}, So goes one step further: he shows that gcd-graphs are the only integral circulant graphs. In \cite{minavc2024gcd}, inspired by the analogy between number fields and function fields, we study gcd-graphs over a polynomial ring with coefficients in a finite field. We show that in this case, there is a direct analog of Ramanujan sums that allows us to describe the spectrum of these gcd-graphs by an explicit formula similar to \cref{eq:ramanjuan_1}. Furthermore, in \cite{minavc2024isomorphic}, we extend this research by investigating whether the spectrum of $G_n(D)$ uniquely determines $D$,  answering an analogous conjecture of Sander-Sander for gcd-graphs over $\F_q[x]$.  Additionally, in \cite{nguyen2024integral}, we generalize So's theorem by giving the necessary and sufficient conditions for a Cayley graph over a finite symmetric algebra to be integral. As a by-product of this work, we construct examples of finite symmetric algebras with arithmetic origins.

The goal of this article is to define and study the concept of gcd-graphs over arbitrary finite (commutative, associative, unital) rings. Our hope is to unify various constructions in the literature and lay the groundwork for this area of research. In particular, we generalize some existing results in \cite{klotz2007some,  minavc2024gcd, nguyen2024integral, saxena2007parameters} to this general setting. 

\subsection{Outline}
In \cref{sec:gcd-graphs} we introduce the notion of gcd-graphs over a finite ring which naturally generalizes some previous work in \cite{klotz2007some, minavc2024gcd}. We also discuss various equivalence conditions for a graph to be a gcd-graph. As a by-product, we describe explicitly the structure of the generating set in a gcd-graph.  In \cref{sec:connectedness}, we investigate the connectivity of a gcd-graph. In particular, we provide a sharp upper bound on the diameter of a gcd-graph which generalizes a theorem of Saxena, Severini, and Shparlinski in \cite{saxena2007parameters}. Finally, in \cref{sec:spectrum}, we describe explicitly the spectrum of a gcd-graph over a finite symmetric algebra. The main result of this section gives a unified treatment for the spectra of various gcd-graphs previously studied in the literature. 

\section{Gcd-graphs over a finite ring} \label{sec:gcd-graphs}
In this section, we introduce the notion of a gcd-graph defined over a finite ring that unifies the definitions in \cite{klotz2007some, minavc2024gcd}.  Let us first recall the definition of a Cayley graph defined over a finite ring. 
\begin{definition}
Let $R$ be a finite ring and $S \subset R \setminus \{0 \}$ a symmetric subset. The Cayley graph $\Gamma(R,S)$ is defined as follow. 
\begin{enumerate}
    \item The vertex set of $\Gamma(R,S)$ is $R.$
    \item Two vertices $x, y \in R$ are adjacent if $x-y \in S.$
\end{enumerate}
In practice, $S$ is often called the generating set for $\Gamma(R,S).$ 
\end{definition}
As noted in \cite[Section 4]{chudnovsky2024prime} and supported further by \cite{minavc2024gcd, nguyen2024integral}, Cayley graphs defined over a ring exhibit richer structures compared to those defined over abstract abelian groups. This feature arises from the interaction between the additive and multiplicative structures of the ring $R$. In particular, ideals play a fundamental role in the study of these graphs.

For a ring that is not necessarily a quotient of a principal ideal domain, the notion of the greatest common divisor is not well defined. As a result, to define gcd-graphs over such a ring, we first need to revisit the definition of the greatest common divisor. We recall that a positive integer $n$ and two integers $a,b$, $\gcd(a,n) = \gcd(b,n)$ if and only if $a$ and $b$ generate the same ideal in the ring $\Z/n.$ This is also equivalent to the condition that $a \equiv ub \pmod{n}$ for some $u \in (\Z/n)^{\times}.$ This property generalizes well for Artinian rings, and in particular, to finite rings. More precisely 
\begin{lem} (See \cite[Lemma 2.1]{kaplansky1949elementary} \label{lem:kaplansky}
Let $R$ be an Artinian ring. If $Ra = Rb$, then there exists $u \in R^{\times}$ such that $b= ua.$ 
\end{lem}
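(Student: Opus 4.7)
My plan is to reduce to the local case via the structure theorem for commutative Artinian rings, and then handle the local case with a short dichotomy argument. Concretely, a commutative Artinian ring $R$ decomposes as a product $R \cong \prod_{i=1}^{k} R_i$ of local Artinian rings $(R_i, \mathfrak{m}_i)$, and a tuple in $\prod_{i} R_i$ is a unit precisely when each of its coordinates is a unit. Since the condition $Ra = Rb$ is preserved componentwise under this decomposition, it suffices to solve the problem in each local factor and then paste the resulting units together.

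So assume $R$ is a local Artinian ring with maximal ideal $\mathfrak{m}$. From $Ra = Rb$ I would extract $r, s \in R$ with $b = ra$ and $a = sb$, and substitute to obtain $(1 - sr)a = 0$. I would then exploit the two key facts about local rings: $R^{\times} = R \setminus \mathfrak{m}$, and $1 + \mathfrak{m} \subseteq R^{\times}$ (an element of $1 + \mathfrak{m}$ cannot itself lie in $\mathfrak{m}$, else $1 \in \mathfrak{m}$). A case split on whether $1 - sr \in \mathfrak{m}$ now closes the argument: if $1 - sr$ is a unit then multiplying by its inverse gives $a = 0$ and hence $b = 0$, so $u = 1$ works; if on the other hand $1 - sr \in \mathfrak{m}$ then $sr = 1 - (1 - sr) \in 1 + \mathfrak{m}$ is a unit, which forces $r$ itself to be a unit, and we may take $u := r$, since $u a = r a = b$.

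I expect the main conceptual step to be the reduction to the local case; beyond that, the whole argument collapses to the local dichotomy ``unit or in $\mathfrak{m}$''. A subtlety worth flagging is that Kaplansky's original lemma is stated for arbitrary (possibly non-commutative) Artinian rings, where the product decomposition into local pieces is unavailable and one must instead argue via the descending chain condition on principal left ideals such as $R (sr)^n$ to promote a one-sided inverse to a two-sided one. In the commutative setting that is relevant to this paper, however, the local decomposition makes the proof almost immediate, and no genuine obstacle arises.
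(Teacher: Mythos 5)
Your argument is correct. Note, however, that the paper does not actually prove this lemma at all --- it simply cites Kaplansky's original article --- so there is no in-paper proof to compare against; what you have supplied is a genuine addition. Your reduction via the structure theorem (a commutative Artinian ring is a finite product of local Artinian rings, principal ideals decompose componentwise, and units are exactly the tuples of units) is sound, and the local dichotomy is airtight: from $b=ra$, $a=sb$ one gets $(1-sr)a=0$; if $1-sr$ is a unit then $a=b=0$, and otherwise $sr\in 1+\mathfrak{m}\subseteq R^{\times}$ forces $r\in R^{\times}$, so $u=r$ works. It is worth observing that the Artinian hypothesis enters only through the decomposition into local factors --- the local case itself needs nothing beyond ``unit or in $\mathfrak{m}$'' --- which makes clear exactly where the hypothesis is used (the statement fails for general commutative rings). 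Your caveat about the non-commutative setting is well taken but harmless here, since the paper restricts throughout to commutative rings; in that generality Kaplansky's own argument must work with the descending chain condition directly rather than with a product decomposition. In short: correct, self-contained, and arguably a service to the reader that the paper itself does not provide.
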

From this perspective, a crucial observation here is that a generating set $S$ in a finite ring $R$ such as $\Z/n$ or $\F_q[x]/f$ gives rise to a gcd-graph if and only if $S$ is stable under the action of $R^{\times}$; that is, if $s \in S$, then $us \in S$ for all $u \in R^{\times}$. Motivated by this observation, we introduce the following definition.

\begin{definition} \label{def:gcd}
    We say that $\Gamma(R,S)$ is a gcd-graph if $S$ is stable under the action of $R^{\times}.$
\end{definition}
\begin{rem}

When $S=R^{\times}$, the graph $\Gamma(R,R^{\times})$ is known as a unitary Cayley graph (see \cite{unitary,klotz2007some}). This shows that the unitary Cayley graph over $R$ is a special case of gcd-graphs. Other examples, as explained previously, include the gcd-graphs defined in \cite{klotz2007some} for the ring $\Z/n$ and the gcd-graphs defined in \cite{minavc2024gcd} for the ring $\F_q[x]/f$ where $\F_q$ is a finite field with $q$ elements and $f$ is a non-zero polynomial in $\F_q[x].$
\end{rem}

We provide below the necessary and sufficient conditions for a Cayley graph over $R$ to be a gcd-graph. These conditions are somewhat more explicit than \cref{def:gcd}. Furthermore, together with \cref{cor:description_of_S}, this description will be important later on when we study the spectrum of these gcd-graphs. 

\begin{prop} \label{prop:gcd-graph}
Let $R$ be a finite commutative ring and $S$ a subset of $R$.
Then the following are equivalent. 
\begin{enumerate}
    \item $\Gamma(R,S)$ is a gcd-graph.
    \item There exist distinct nonzero principal ideals $\I_1, \I_2, \ldots, \I_k$ such that for each $r \in R$, $r \in S$ if and only if there exists $1 \leq i \leq k$ such that $\I_i = Rr.$
\end{enumerate}
\end{prop}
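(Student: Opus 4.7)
The plan is to use Kaplansky's lemma (the cited \cref{lem:kaplansky}) to identify the orbits of the natural $R^{\times}$-action on $R$ with the fibres of the map $r \mapsto Rr$ sending an element to the principal ideal it generates. Once this identification is in place, the equivalence becomes a book-keeping exercise: stability under $R^{\times}$ is exactly saying that $S$ is a union of such fibres.

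More precisely, for the implication $(1) \Rightarrow (2)$, I would begin by observing that if $r \in S$ and $u \in R^{\times}$, then $R(ur) = Rr$; conversely, by \cref{lem:kaplansky}, if $Rr = Rr'$ then $r' = ur$ for some $u \in R^{\times}$. Hence the $R^{\times}$-orbit of any $r \in R$ is precisely $\{r' \in R : Rr' = Rr\}$. Assuming $\Gamma(R,S)$ is a gcd-graph, $S$ is a union of $R^{\times}$-orbits, so if we choose representatives $r_1, \dots, r_k$ of the finitely many orbits that make up $S$ and set $\mathcal{I}_i := Rr_i$, then the $\mathcal{I}_i$ are pairwise distinct nonzero principal ideals (nonzero because $0 \notin S$) and $S$ is exactly the set of $r \in R$ with $Rr \in \{\mathcal{I}_1, \dots, \mathcal{I}_k\}$.

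For the implication $(2) \Rightarrow (1)$, suppose the $\mathcal{I}_i$ are given as in the statement. Take any $r \in S$ and any $u \in R^{\times}$; then $R(ur) = Rr = \mathcal{I}_i$ for the index $i$ witnessing $r \in S$, so $ur \in S$ as well. This shows $S$ is $R^{\times}$-stable, i.e.\ $\Gamma(R,S)$ is a gcd-graph. One should also verify that $S$ is symmetric ($-1 \in R^{\times}$, so $R^{\times}$-stability automatically gives $-S = S$), which is needed to be a legitimate Cayley generating set.

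There is really no serious obstacle here; the whole content lies in \cref{lem:kaplansky}, which reduces the set-theoretic problem to elementary manipulations. The one point to watch is simply ensuring the enumerated ideals are distinct and nonzero, and that the characterisation handles elements $r \notin S$ correctly (i.e.\ the ``if and only if'' in (2) really gives an equality of sets, not just an inclusion), both of which follow immediately from the orbit description above.
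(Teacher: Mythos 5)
Your proposal is correct and follows essentially the same route as the paper: both directions rest on the observation that $R(ur)=Rr$ for $u\in R^{\times}$ together with \cref{lem:kaplansky} to show that elements generating the same ideal lie in one $R^{\times}$-orbit, with your orbit-theoretic phrasing being only a cosmetic repackaging of the paper's argument. The paper likewise notes the symmetry of $S$ and $0\notin S$, so nothing is missing.
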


\begin{proof}
    First, we claim that $(2) \implies (1).$ Clearly, $S$ is symmetric and $0\not\in S$.
    By definition, we need to show that if $s \in S$ and $u \in R^{\times}$ then $us \in S$. Since $s \in S$, there exists an ideal $\I_i$ with $1 \leq i \leq k$ such that $Rs= \I_k.$ Since $u \in R^{\times}$, $\I_k = Rs = Rus.$ By $(2)$, this shows that $us \in S.$ 
    
    Let us show that $(1)$ implies $(2)$ as well. For each $s \in S$, Let $\I_s = Rs$ be the ideal generated by $s$. Let $\{\I_1, \I_2, \ldots, \I_k\}$ be the set of all $\I_s.$ We conclude that for each $s \in S$, $\I_s = \I_i$ for some $1 \leq i \leq k.$ Conversely, if $r \in R$ such that $Rr \in \{\I_1, \I_2, \ldots, \I_k\}$, then $Rr=Rs$ for some $s \in S.$ By \cref{lem:kaplansky}, we know that $r=us$ for some $u \in R^{\times}$. Since $\Gamma(R,S)$ is a gcd-graph, $S$ is stable under the action of $R^{\times}.$ This shows that $r \in S$ as well. 
\end{proof}
By \cref{prop:gcd-graph} and to be consistent with the literature, we will denote the gcd-graph $\Gamma(R,S)$ by $G_{R}(D)$ where $D = \{\I_1, \I_2, \ldots, \I_k \}.$

\begin{figure}[H]
\centering
\includegraphics[width=0.7 \linewidth]{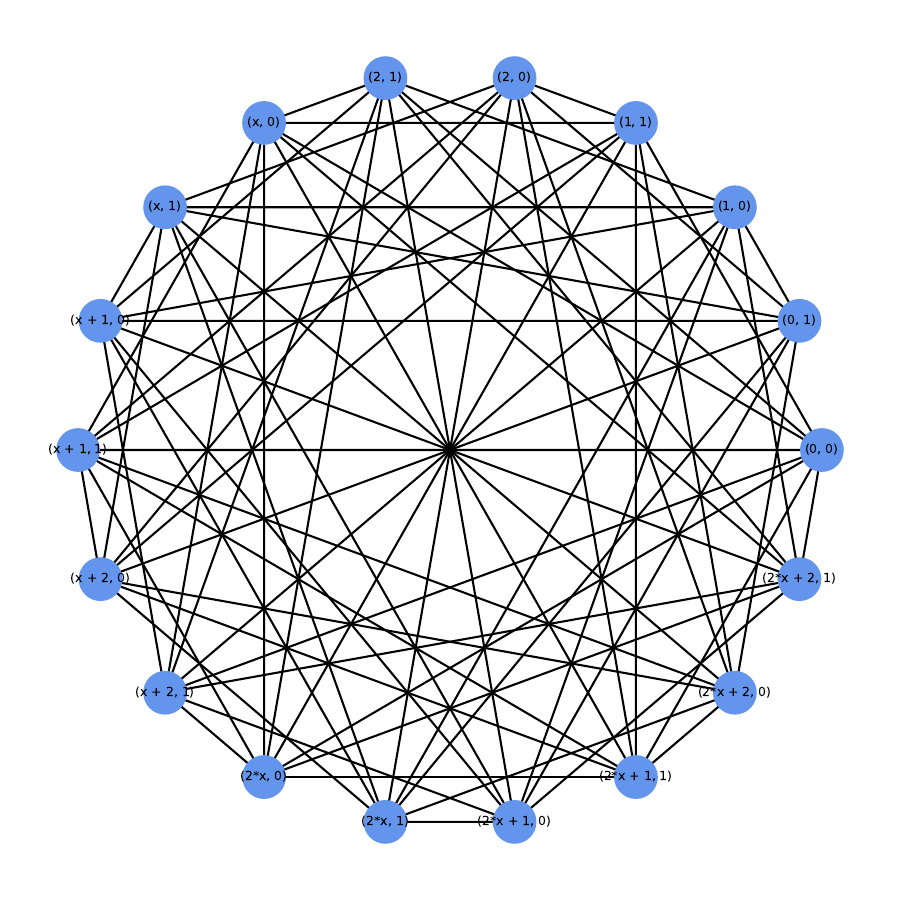}
\caption{The gcd-graph $G_{R}(D)$ where $R=\F_3[x]/x^2 \times \F_2$, $D = \{R(1,1), R(x,0)\}$}
\label{fig:opitmal}
\end{figure}

Our next goal is to provide an explicit description of the generating set $S$ for a gcd-graph $\Gamma(R,S).$ To do so, we first recall the following definition from ring theory.

\begin{definition}
    Let $T$ be a subset of $R.$ The annihilator ideal $\text{Ann}_{R}(T)$ is defined as 
    \[ \text{Ann}_R(T) = \{a \in R \mid at=0 \text{ for all } t \in T\}. \]
\end{definition}

With this definition, we can now explicitly describe the generating set $S$. 
\begin{cor} \label{cor:description_of_S}
Let $\Gamma(R,S) = G_{R}(D)$ be a gcd-graph where $D = \{\I_1, \I_2, \ldots, \I_k \}$. Suppose that $\I_i = Rx_i$.  Then every element $s \in S$ can be written  in the form $s = \hat{u} x_i$ for some $1 \leq i \leq k$, $u \in (R/{\rm Ann}_{R}(\I_i))^{\times}$, and $\hat{u}$ is a lift of $u$ in $R^{\times}$, and $u$ is uniqutely determined by $s$.
\end{cor}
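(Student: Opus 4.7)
The plan is to chain together \cref{prop:gcd-graph} and \cref{lem:kaplansky}: the first pins down which $\I_i$ the ideal $Rs$ must equal, and the second converts the ideal equality $Rs = Rx_i$ into an equality $s = \hat{u} x_i$ for an actual unit $\hat{u} \in R^{\times}$. First I would fix $s \in S$ and invoke condition $(2)$ of \cref{prop:gcd-graph} to obtain an index $i$ with $Rs = \I_i = Rx_i$. The index is forced to be unique because the ideals $\I_1, \ldots, \I_k$ are assumed pairwise distinct. Since $R$ is finite and hence Artinian, \cref{lem:kaplansky} applied to $Rs = Rx_i$ yields a unit $\hat{u} \in R^{\times}$ with $s = \hat{u} x_i$. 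Letting $u$ be the image of $\hat{u}$ under the quotient map $R \twoheadrightarrow R/\Ann_R(\I_i)$, the element $u$ is automatically a unit in $R/\Ann_R(\I_i)$, since ring homomorphisms send units to units.

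The second (and only substantive) step is the uniqueness of $u$. I would take a second decomposition $s = \hat{v} x_i$ with $\hat{v} \in R^{\times}$ and form the difference to get $(\hat{u} - \hat{v}) x_i = 0$, i.e.\ $\hat{u} - \hat{v} \in \Ann_R(x_i)$. The clean way to finish is to note the identity $\Ann_R(Rx_i) = \Ann_R(x_i)$: the inclusion $\subseteq$ comes from evaluating at $y = x_i$, while the reverse uses commutativity, as $a x_i = 0$ implies $a(r x_i) = r(a x_i) = 0$ for every $r \in R$. This identifies the ambient annihilator with $\Ann_R(\I_i)$ and gives $u = v$ in the quotient.

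I do not foresee any real obstacle: the corollary is essentially a repackaging of \cref{prop:gcd-graph} once Kaplansky's lemma has been cited, and the only point requiring a moment's care is the elementary identity $\Ann_R(Rx_i) = \Ann_R(x_i)$, which is why commutativity of $R$ (assumed in \cref{prop:gcd-graph}) is needed here.
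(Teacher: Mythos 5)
Your proof is correct and follows essentially the same route as the paper: invoke \cref{prop:gcd-graph} to get $Rs=\I_i$, apply \cref{lem:kaplansky} to produce a unit $\hat{u}$ with $s=\hat{u}x_i$, and then pass to the class of $\hat{u}$ modulo $\Ann_R(\I_i)$. One point worth noting: the paper's own proof only verifies the \emph{well-definedness} direction (if $u_1\equiv u_2 \pmod{\Ann_R(\I_i)}$ then $u_1x_i=u_2x_i$), whereas the claim that $u$ is \emph{uniquely determined} by $s$ is the converse implication. You prove exactly that converse, via $(\hat{u}-\hat{v})x_i=0$ together with the elementary identity $\Ann_R(Rx_i)=\Ann_R(x_i)$, so your write-up actually addresses the uniqueness assertion more directly than the paper does. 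No gaps.
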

\begin{proof}
    By \cref{prop:gcd-graph}, every $s \in S$ can be written in the form $s=\hat{u} x_i$ for some $1 \leq i \leq k$ and $\hat{u} \in R^{\times}.$ By definition, if $u_1 \equiv u_2 \pmod{{\rm Ann}_{R}(\I_i)}$ then $u_1x_i = u_2x_i.$ Therefore, the expression $s=\hat{u}x_i$ only depends on the class of $\hat{u} \in (R/{\rm Ann}_{R}(\I_i))^{\times}.$
\end{proof}

While the requirement that each $\I_i$ is principal seems quite strict, we will show below that finite rings which are quotients of the ring of integers in a global field have this property. To state and prove this statement, we first fix some notations and set up the background. Let $K$ be a global field. Let $\OO_K$ be the integral closure in $K$ of $\Z$ if $\text{char}(K)=0$ or of $\F_q[t]$ if $\text{char}(K)=p>0$. Let $\mathfrak{a}$ be a nonzero ideal in $\OO_K$. 
\begin{lem} \label{lem:principal_ring}
Let $R = \OO_K/\mathfrak{a}$. Then $R$ is a principal ring; i.e., all of its ideals are principal.  
\end{lem}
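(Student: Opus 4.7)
The plan is to exploit the Dedekind domain structure of $\OO_K$ together with the Chinese Remainder Theorem, reducing the claim to the local case of a prime power ideal.

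First, I would recall that in both the number field and function field settings, $\OO_K$ is a Dedekind domain. Thus the nonzero ideal $\mathfrak{a}$ admits a unique factorization $\mathfrak{a} = \mathfrak{p}_1^{e_1}\cdots\mathfrak{p}_r^{e_r}$ into powers of distinct nonzero prime ideals. Since the $\mathfrak{p}_i^{e_i}$ are pairwise coprime, the Chinese Remainder Theorem gives a ring isomorphism
\[
R \;=\; \OO_K/\mathfrak{a} \;\cong\; \prod_{i=1}^{r} \OO_K/\mathfrak{p}_i^{e_i}.
\]
A product of principal ideal rings is again a principal ideal ring: any ideal of a finite product decomposes as a product of ideals in the factors, and if each component is generated by some $a_i$, then the tuple $(a_1,\dots,a_r)$ generates the whole ideal. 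This reduces the problem to showing that each factor $\OO_K/\mathfrak{p}^e$ is principal.

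For this reduction step, fix a nonzero prime $\mathfrak{p}$ and an integer $e \geq 1$. By the ideal correspondence, the ideals of $\OO_K/\mathfrak{p}^e$ are exactly the images of ideals of $\OO_K$ containing $\mathfrak{p}^e$. By unique factorization in the Dedekind domain $\OO_K$, these are precisely the ideals $\mathfrak{p}^j$ with $0 \leq j \leq e$. The key technical step is to exhibit a generator for the maximal ideal $\mathfrak{m} = \mathfrak{p}/\mathfrak{p}^e$. I would pick any element $\pi \in \mathfrak{p}\setminus\mathfrak{p}^2$; then $v_\mathfrak{p}(\pi)=1$, so $(\pi) = \mathfrak{p}\,\mathfrak{b}$ for some ideal $\mathfrak{b}$ coprime to $\mathfrak{p}$. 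Since $\mathfrak{b}$ and $\mathfrak{p}^{e-1}$ are coprime we have $\mathfrak{b}+\mathfrak{p}^{e-1}=\OO_K$, and multiplying by $\mathfrak{p}$ gives
\[
(\pi)+\mathfrak{p}^e \;=\; \mathfrak{p}\mathfrak{b}+\mathfrak{p}^e \;=\; \mathfrak{p}(\mathfrak{b}+\mathfrak{p}^{e-1}) \;=\; \mathfrak{p}.
\]
Hence the image of $\pi$ generates $\mathfrak{m}$ in $\OO_K/\mathfrak{p}^e$, and the image of $\pi^j$ generates $\mathfrak{p}^j/\mathfrak{p}^e$ for each $0\leq j\leq e$. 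This shows every ideal of $\OO_K/\mathfrak{p}^e$ is principal.

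The main obstacle is the local step above, specifically verifying that $\mathfrak{p}/\mathfrak{p}^e$ is principal and not merely a quotient of a DVR's maximal ideal. The cleanest way to handle this, as sketched, is via the coprimality identity $\mathfrak{b}+\mathfrak{p}^{e-1}=\OO_K$; an alternative would be to localize at $\mathfrak{p}$, use that $(\OO_K)_\mathfrak{p}$ is a DVR with principal maximal ideal, and transfer back using the fact that localization at $\mathfrak{p}$ induces an isomorphism $\OO_K/\mathfrak{p}^e \cong (\OO_K)_\mathfrak{p}/\mathfrak{p}^e(\OO_K)_\mathfrak{p}$ (a standard consequence of $\mathfrak{p}^e$ being $\mathfrak{p}$-primary). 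Once this local case is established, the CRT decomposition and the product-of-principal-rings observation immediately yield the conclusion.
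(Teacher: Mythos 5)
Your argument is correct, and its global skeleton (factor $\mathfrak{a}$ into prime powers, apply the Chinese Remainder Theorem, observe that a finite product of principal ideal rings is principal) coincides with the paper's. Where you diverge is in the local step: the paper disposes of $\OO_K/\mathfrak{p}^e$ in one line by identifying it with $(\OO_K)_{\mathfrak{p}}/\mathfrak{p}^e$, a quotient of a discrete valuation ring and hence automatically a principal ideal ring, whereas you stay inside the Dedekind domain $\OO_K$ and produce an explicit generator: you pick $\pi\in\mathfrak{p}\setminus\mathfrak{p}^2$, write $(\pi)=\mathfrak{p}\mathfrak{b}$ with $\mathfrak{b}$ coprime to $\mathfrak{p}$, and deduce $(\pi)+\mathfrak{p}^e=\mathfrak{p}$, so that the ideals of the quotient are exactly the images of $\pi^0,\dots,\pi^e$. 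Both are complete; the paper's version is shorter but leans on the standard facts that localizations (or completions) of Dedekind domains at nonzero primes are DVRs and that $\OO_K/\mathfrak{p}^e\cong(\OO_K)_{\mathfrak{p}}/\mathfrak{p}^e(\OO_K)_{\mathfrak{p}}$, while yours is more self-contained and has the minor bonus of exhibiting a uniformizer, i.e.\ a concrete generator for every ideal of $R$. You correctly flag the localization argument as the alternative, which is precisely the route the paper takes.
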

\begin{proof}
Let $\mathfrak{a} = \prod_{i=1}^d \mathfrak{p}_i^{e_i}$ be the factorization of $\mathfrak{a}$ into a product of distinct prime ideals. Then 
\[ R = \OO_K/\mathfrak{a} \cong \prod_{i=1}^d \OO_K/\mathfrak{p_i}^{e_i} \cong \prod_{i=1}^d (\OO_{K})_{\mathfrak{p_i}}/\mathfrak{p_i}^{e_i} .\] 
Here $(\OO_K)_{\mathfrak{p_i}}$ is the completion of $\OO_K$ at $\mathfrak{p_i}.$ It is known that $(\OO_K)_{\mathfrak{p}_i}$ is a discrete valuation ring, and in particular, a principal ideal domain. This implies that each factor $(\OO_{K})_{\mathfrak{p_i}}/\mathfrak{p_i}^{e_i}$ is a principal ideal ring. Consequently, $\OO_K/\mathfrak{a}$ is a principal ideal ring as well. 
\end{proof}

\begin{rem}
By \cite[Proposition 2.1]{nguyen2024integral}, if $R$ is a finite ring and $\Gamma(R,S)$ is a gcd-graph, then $\Gamma(R,S)$ is integral. In \cref{sec:spectrum}, we will provide an explicit description of the spectrum of a gcd-graph when $R$ is a finite symmetric algebra using the theory of generalized Ramanujan sums. 
\end{rem}

\section{Connectivity of gcd-graphs} \label{sec:connectedness}
In this section, we study the connectivity of a gcd-graph $\Gamma(R,S).$ By \cref{prop:gcd-graph}, we can assume that $\Gamma(R,S)=G_{R}(D)$ where 
$D = \{\I_1, \I_2, \ldots, \I_k \}$ is a set of principal ideals in $R$; namely $\I_i = Rx_i$ for some $x_i \in R.$   In this setting, the generating set $S$ is precisely the set of $s$ such that $Rs = \I_i$ for some $1 \leq i \leq k.$ Equivalently, there exists $u \in R^{\times}$ such that $s = ux_i.$

We start with the simplest case which is a direct generalization of \cite[Theorem 3.2]{minavc2024gcd}. We remark that the proof that we will give for this statement is not optimal in the sense that it does not provide a sharp upper bound for the diameter of $G_{R}(D).$ However, we include it here because it serves as a prototype for our argument in the general case. 
\begin{prop} \label{prop:case_G_R_connected}
    Assume that the unitary Cayley graph $G_{R}$ is connected. Then $G_{R}(D)$ is connected if and only if 
    \[ \mathcal{I}_1+ \cdots+ \mathcal{I}_k = R.\]
    Here $\mathcal{I}_1+ \cdots +\mathcal{I}_k$ is the sum of these ideals. 
\end{prop}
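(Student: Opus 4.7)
The plan is to translate the connectivity question into an additive-generation question and then use the hypothesis on $G_R$ to identify the additive subgroup generated by the units-orbit of each $x_i$ with $\mathcal{I}_i$.

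First, I would invoke the standard fact that a Cayley graph $\Gamma(R,S)$ is connected if and only if $S$ generates $R$ as an additive group. So the proposition reduces to proving that, under the assumption that $R^\times$ generates $R$ additively, the set
\[ S = \bigcup_{i=1}^k \{\,u x_i : u \in R^\times\,\} \]
generates $R$ additively if and only if $\mathcal{I}_1 + \cdots + \mathcal{I}_k = R$. The necessity is immediate: each element of $S$ lies in some $\mathcal{I}_i$, so the additive subgroup generated by $S$ is contained in $\mathcal{I}_1 + \cdots + \mathcal{I}_k$; if this subgroup equals $R$, then the sum of ideals equals $R$.

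For sufficiency, the key step is to identify, for each fixed $i$, the additive subgroup
\[ A_i := \Bigl\langle \{\,u x_i : u \in R^\times\,\}\Bigr\rangle_{\text{add}} \]
with the ideal $\mathcal{I}_i = R x_i$. Clearly $A_i \subseteq \mathcal{I}_i$. For the reverse inclusion, I would use the connectivity hypothesis on $G_R = \Gamma(R, R^\times)$: it gives that every $r \in R$ can be written as a finite sum $r = u_1 + \cdots + u_m$ with $u_j \in R^\times$. Multiplying by $x_i$ yields $r x_i = \sum_j u_j x_i \in A_i$, so $\mathcal{I}_i = R x_i \subseteq A_i$. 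Hence $A_i = \mathcal{I}_i$, and summing over $i$ gives that the additive subgroup generated by $S$ is exactly $\mathcal{I}_1 + \cdots + \mathcal{I}_k$, which equals $R$ by hypothesis.

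The only mildly subtle point — and what I expect to be the main obstacle to presenting cleanly — is the identification $A_i = \mathcal{I}_i$: one must explain why closure of $A_i$ under multiplication by units (which is automatic from its definition) can be upgraded to closure under multiplication by arbitrary elements of $R$, and this is precisely where the connectivity of $G_R$ enters. Everything else is essentially a formal manipulation with Cayley-graph connectivity and the definition of $S$ inherited from Proposition 2.2 and Corollary 2.4.
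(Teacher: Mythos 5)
Your proposal is correct and follows essentially the same route as the paper: both directions reduce connectivity to additive generation by $S$, the forward direction uses that every element of $S$ lies in some $\mathcal{I}_i$, and the reverse direction uses the connectivity of $G_R$ to write ring elements as ($\Z$-combinations of) units and then multiplies by $x_i$ to land in $S$. Your packaging of the key step as the identity $A_i=\mathcal{I}_i$ is just a mild reorganization of the paper's computation $a=\sum_{i}\sum_{j}m_{ij}s_{ij}x_i$.
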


\begin{proof}
    Suppose that $G_{R}(D)$ is connected. Then $S$ generates $R$ as an abelian group. In particular, we can find $n_1, n_2, \ldots, n_h \in \Z$ and $s_i \in S$ such that
    \[ 1 = n_1 s_1 + n_2 s_2 +\cdots+ n_h s_h.\]
Because $Rs_i \in \{\I_1, \I_2, \ldots, \I_k\}$, the above equation shows that $1 \in \I_1+\I_2+\cdots +\I_k.$ Since this sum is an ideal, we conclude that $\I_1+\I_2+\cdots+\I_k=R.$ 

Conversely, suppose that $\I_1+\I_2+\cdots + \I_k =R$. We claim that $G_{R}(D)$ is connected. Let $ a \in R.$ By our assumption, we can find $a_1, a_2, \ldots, a_k \in R$ such that $a = \sum_{i=1}^k a_i x_i.$  Since $G_{R}$ is connected, for each $1 \leq i \leq k$, we can write $a_i = \sum_{j=1}^{n_i} m_{ij} s_{ij}$, where $m_{ij} \in \Z$ and $s_{ij} \in R^{\times}.$ Consequently, we can write 
    \[ a = \sum_{i=1}^k \sum_{j=1}^{n_i} m_{ij} s_{ij} x_i. \]
    Because $s_{ij} \in R^{\times}$, $s_{ij}x_i \in S.$ This shows that $a$ belongs to the abelian group generated by $S$. Since this is true for all $a \in R$, we conclude that $G_{R}(D)$ is connected. 
\end{proof}

\begin{rem}
    We can optimize the proof given in \cref{prop:case_G_R_connected} by carefully controlling the number of units $s_{ij}$ that sum up to $a_i.$  By definition, this number is bounded above by the diameter of $G_{R}.$ The above argument shows that when $G_{R}(D)$ is connected, its diameter is bounded above by $|D| \text{diam}(G_{R}).$ We remark that by \cite[Theorem 3.1]{unitary}, when $G_R$ is connected, its diameter is at most $3$; namely $\text{diam}(G_R) \leq 3.$ Consequently, we have a simple estimate ${\rm diag}(G_R(D)) \leq 3 |D|.$ We also note while this upper bound is explicit, it is not optimal in general. We refer to \cref{thm:connectitivy_main} for a better upper bound. 
\end{rem}

We now modify the above proof to the general case. We start with the following lemma. 

\begin{lem} \label{lem:quotient_property}
    Let $R$ be a finite ring and $R'$  a quotient of $R$; namely there exists a surjective ring homomorphism $\Phi\colon R \to R'$. Let $D = \{\I_1, \I_2, \ldots, \I_k\}$ be a set of principal ideals in $R$ and 
    \[ D' = \{\Phi(\I_1),\Phi(\I_2),  \ldots, \Phi(\I_k) \},  \]
    be the image of $D$ in $R'$ (to avoid the tautological case, we adopt the convention of removing $\I_i$ from $D'$ whenever $\Phi(\I_i) = 0$). Then the following statements hold.
    \begin{enumerate}
    \item $\Phi(\I_i)$ is a principal ideal for each $1 \leq i \leq k.$
    \item Let $S'$ be the generating set in $R'$ associated with $D'$ as described in \cref{prop:gcd-graph}. Then  $\Phi(S) \subseteq S'$. Consequently, $\Phi\colon  G_{R}(D) \to G_{R'}(D')$ is a graph morphism. 
    \item Suppose that $G_{R}(D)$ is connected. Then $G_{R'}(D')$ is also connected. 
    \end{enumerate}
\end{lem}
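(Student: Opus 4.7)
The plan is to prove the three parts in sequence, since each builds on the previous. Part (1) will follow directly from surjectivity of $\Phi$; part (2) combines (1) with the explicit description from \cref{cor:description_of_S}; and part (3) is a transfer-of-paths argument via the graph morphism that part (2) establishes.

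For part (1), since $\I_i = Rx_i$ and $\Phi$ is surjective, one has $\Phi(\I_i) = \Phi(R)\,\Phi(x_i) = R'\Phi(x_i)$, which is the principal ideal of $R'$ generated by $\Phi(x_i)$. For part (2), I would pick any $s \in S$ and apply \cref{cor:description_of_S} to write $s = \hat{u} x_i$ for some index $i$ and some $\hat{u} \in R^{\times}$. Applying $\Phi$ gives $\Phi(s) = \Phi(\hat{u})\,\Phi(x_i)$ with $\Phi(\hat{u}) \in (R')^{\times}$ (units go to units under any ring homomorphism, via $\Phi(\hat{u})\Phi(\hat{u}^{-1}) = 1$). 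Therefore $R'\Phi(s) = R'\Phi(x_i) = \Phi(\I_i)$. When $\Phi(\I_i) \neq 0$ we have $\Phi(\I_i) \in D'$, and $\Phi(s)$ is a unit multiple of the generator $\Phi(x_i)$, so $\Phi(s) \in S'$. The borderline case $\Phi(\I_i) = 0$ forces $\Phi(s) = 0$, and the corresponding edge of $G_R(D)$ collapses to a single vertex of $G_{R'}(D')$; this is precisely what the convention of removing such $\I_i$ from $D'$ accommodates. In either case, $\Phi$ sends an edge of $G_R(D)$ either to an edge of $G_{R'}(D')$ or to a pair of coinciding vertices, which is the correct notion of graph morphism in this setting.

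For part (3), I would take arbitrary $a', b' \in R'$, lift them via surjectivity of $\Phi$ to $a, b \in R$, and invoke connectedness of $G_R(D)$ to produce a path $a = v_0, v_1, \ldots, v_n = b$ with each $v_j - v_{j-1} \in S$. Applying $\Phi$ yields a sequence $\Phi(v_0), \ldots, \Phi(v_n)$ in $R'$ in which, by part (2), each consecutive difference $\Phi(v_j) - \Phi(v_{j-1}) = \Phi(v_j - v_{j-1})$ either lies in $S'$ (so the vertices are adjacent in $G_{R'}(D')$) or equals zero (so the vertices coincide). Deleting any coincident repetitions yields an honest walk from $a'$ to $b'$ in $G_{R'}(D')$, which proves connectivity.

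The only real obstacle is the bookkeeping around the case $\Phi(\I_i) = 0$: one must verify that the conventional pruning of $D'$ is compatible with the claim $\Phi(S) \subseteq S'$ in its intended interpretation and that the edge collapses do not disconnect the image of any path. Both points are handled cleanly by the observation that a collapsed edge simply produces a repeated vertex, which can be contracted without affecting whether the endpoints are connected.
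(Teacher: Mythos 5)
Your proposal is correct and follows essentially the same route as the paper: surjectivity gives principality of $\Phi(\I_i)$, the unit-times-generator description of $S$ from \cref{cor:description_of_S} gives $\Phi(S)\subseteq S'$, and connectivity transfers along the resulting graph morphism. Your explicit handling of the degenerate case $\Phi(\I_i)=0$ (collapsed edges becoming repeated vertices) is a welcome refinement that the paper's proof leaves implicit.
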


\begin{proof}
Let us first prove part $(1)$. Suppose that $\I_i =Rx_i$ for each $1 \leq i \leq k.$ Since $\Phi$ is surjective, we conclude that $\Phi(\I_i)= R' \Phi(x_i).$ This shows that $\Phi(\I_i)$ is a principal ideal generated by $\Phi(x_i).$

For the second part, we know from \cref{cor:description_of_S} that each $s \in S$ can be written in the form $s=ux_i$ for some $u \in R^{\times}.$ We then see that $\Phi(s)=\Phi(u) \Phi(x_i).$ Since $\Phi(u) \in (R')^{\times}$, we conclude that $R'\Phi(s) =R'\Phi(x_i)$ and hence  $\Phi(s) \in S'.$ This shows that $\Phi\colon G_{R}(D) \to G_{R'}(D')$ is a graph homomorphism. 

For the last part, since $\Phi\colon G_{R}(D) \to G_{R'}(D')$ is a graph homomorphism, $\Phi$ maps a walk in $G_{R}(D)$ to a walk in $G_{R'}(D').$ As a result, if $G_{R}(D)$ is connected, then $G_{R'}(D')$ is also connected. 
\end{proof}
We now deal with the general case. As observed in \cite[Theorem 3.1]{unitary} and \cite[Lemma 4.33]{chudnovsky2024prime}, the obstruction for $G_R$ to be connected is the existence of multiple local factors of $R$ whose residues are $\F_2.$ Let us explain in more detail. By the structure theorem, $R \cong \prod_{i=1}^d R_i$ where each $R_i$ is a local ring. For simplicity, let us write $R=R_1 \times R_2$ where $R_1$ (respectively $R_2$) consists of local factors whose residue fields are $\F_2$ (respectively $\neq \F_2$). Let $J(R_1)$ be the Jacobson radical of $R_1$. Because $R_1$ is a finite product of local rings, $J(R_1)$ is the kernel of the map $R_1 \to R_1/J(R_1) \cong \F_2^r$ where $r$ is the number of local factors whose residue fields are $\F_2.$ Keeping the same notation, we have the following lemma. 

\begin{lem} (See also \cite[Theorem 3.1]{unitary}) \label{lem:sum_two_units}
    Let $(T, \mathfrak{m})$ be a local ring whose residue field $T/\mathfrak{m}$ is not $\F_2.$ Then every element in $T$ can be written as the sum of two units. 
\end{lem}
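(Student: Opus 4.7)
The plan is to reduce the question to a counting argument in the residue field $k := T/\mathfrak{m}$. Recall that since $T$ is local, an element $u \in T$ is a unit if and only if $u \notin \mathfrak{m}$, equivalently if and only if its image $\bar{u} \in k$ is nonzero. Therefore, writing $t = u_1 + u_2$ with both summands units is equivalent to producing an element $u_1 \in T$ whose reduction $\bar{u}_1 \in k$ satisfies $\bar{u}_1 \neq 0$ \emph{and} $\bar{t} - \bar{u}_1 \neq 0$.

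The first step is thus to identify the set of permissible residues
\[
A := \{ x \in k : x \neq 0 \text{ and } x \neq \bar{t}\} = k \setminus \{0, \bar{t}\}.
\]
Since $|k| \geq 3$ by the hypothesis $k \neq \F_2$, the set $A$ has at least $|k| - 2 \geq 1$ element (the subtracted pair $\{0,\bar t\}$ has cardinality $1$ or $2$). Pick any $x \in A$, lift it to an element $u_1 \in T$ (using surjectivity of $T \twoheadrightarrow k$), and set $u_2 := t - u_1$. Then $\bar{u}_1 = x \neq 0$ gives $u_1 \in T^\times$, while $\bar{u}_2 = \bar{t} - x \neq 0$ gives $u_2 \in T^\times$, so $t = u_1 + u_2$ is the desired decomposition.

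There is no real obstacle here: the argument is essentially the well-known fact that a field with at least three elements admits such sum-of-units decompositions, combined with the lifting of units from $k$ to $T$ that is automatic in a local ring. The only minor point worth flagging is to handle both cases $\bar{t} = 0$ and $\bar{t} \neq 0$ uniformly, which the cardinality bound $|A| \geq |k| - 2 \geq 1$ accomplishes in one line.
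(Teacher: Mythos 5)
Your argument is correct and is essentially the same as the paper's: both reduce to the residue field $k$, use $|k|\geq 3$ to find a nonzero residue distinct from $\bar t$ (equivalently, to write $\bar t$ as a sum of two nonzero elements of $k$), and then lift back to $T$ using the fact that units in a local ring are exactly the elements outside $\mathfrak{m}$. The only cosmetic difference is that the paper lifts both summands and absorbs the error term $m\in\mathfrak{m}$ into one of them, whereas you lift one summand and define the other as $t-u_1$; these are interchangeable.
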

\begin{proof}
    Let $a \in T$ and $\bar{a}$ the image of $a$ in $T/\mathfrak{m}.$ Since $T/\mathfrak{m}$ has more than $2$ elements, we can write $\bar{a}=\bar{u}_1+\bar{u}_2$ where $\bar{u}_1, \bar{u}_2 \neq 0.$ Consequently, there exist $u_1, u_2 \in T$ and $m \in \mathfrak{m}$ such that $a=u_1 + u_2+m = u_1 +(u_2+m).$ By their definition, $u_1, u_2+m \in T^{\times}.$ We conclude that every element in $T$ can be written as a sum of two units. 
\end{proof}

\begin{cor} \label{cor:sum_two_units}
    Every element in $J(R_1) \times R_2$ can be written as the sum of two units in $R.$
\end{cor}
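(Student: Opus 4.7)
The plan is to prove the corollary componentwise in the product decomposition $R = R_1 \times R_2$, handling the $R_1$-factor and the $R_2$-factor separately, and then recombining the two decompositions into a single decomposition of $(a_1, a_2)$ as a sum of two units in $R$. An element $(u,v) \in R_1 \times R_2$ is a unit in $R$ if and only if $u \in R_1^\times$ and $v \in R_2^\times$, so it suffices to exhibit, for each $(a_1, a_2) \in J(R_1) \times R_2$, expressions $a_1 = u_1 + v_1$ with $u_1, v_1 \in R_1^\times$ and $a_2 = u_2 + v_2$ with $u_2, v_2 \in R_2^\times$.

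For the $R_1$-component, I would exploit the defining property of the Jacobson radical: for every $a_1 \in J(R_1)$, the element $1 - a_1$ lies in $R_1^\times$ (since $J(R_1)$ is precisely the set of $x$ for which $1 - rx$ is a unit for every $r$). Writing
\[
a_1 = 1 + (a_1 - 1) = 1 - (1 - a_1),
\]
we see that $a_1$ is a sum of the unit $1$ and the unit $-(1-a_1)$, so the $R_1$-part is handled with no hypothesis on residue fields, purely from membership in $J(R_1)$.

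For the $R_2$-component, I would decompose $R_2 \cong \prod_{j} T_j$ into its local factors, each of which has residue field strictly larger than $\F_2$ by construction. By \cref{lem:sum_two_units} applied to each $T_j$, every component $a_{2,j}$ of $a_2$ can be written as $a_{2,j} = u_{2,j} + v_{2,j}$ with $u_{2,j}, v_{2,j} \in T_j^\times$. Assembling coordinatewise, $a_2 = (u_{2,j})_j + (v_{2,j})_j$ is a sum of two units in $R_2$.

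Combining the two pieces yields $(a_1, a_2) = (1, (u_{2,j})_j) + (a_1 - 1, (v_{2,j})_j)$ in $R = R_1 \times R_2$, with both summands units in $R$, completing the proof. There is essentially no obstacle here beyond invoking the two ingredients at the right place; the only mild subtlety is to remember that the unit property in a product ring is componentwise, and that the standard Jacobson-radical identity $1 + J(R_1) \subseteq R_1^\times$ is exactly what is needed to supplement \cref{lem:sum_two_units}, which by itself does not cover the $\F_2$-residue local factors inside $R_1$.
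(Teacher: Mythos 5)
Your proof is correct and follows essentially the same route as the paper: decompose $a_2$ via \cref{lem:sum_two_units} on the local factors of $R_2$, and pair the unit $1$ in the first coordinate with a unit congruent to $1$ modulo $J(R_1)$ (your $a_1-1$ is literally the paper's $2^w-1+m$, since $2^w=0$ in $R_1$). Your justification of the $R_1$-part via the standard identity $1+J(R_1)\subseteq R_1^{\times}$ is marginally cleaner than the paper's appeal to the quotient $R_1/J(R_1)\cong\F_2^r$, but the argument is the same.
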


\begin{proof}
    Let $(m,a) \in J(R_1) \times R_2$. By \cref{lem:sum_two_units}, we can write $a=u_1+u_2$ where $u_1, u_2 \in R_2^{\times}.$ Let $2^w$ be the characteristic of $R_1$. We then have 
    \[ (m,a)=(1,u_1)+(2^w-1+m, u_2).\]
    We remark that $2^w-1+m \in R_1^{\times}$ since its image in $R_1/J(R_1)  = \F_2^r$ is  $1$ which is a unit. Consequently, $(2^w-1+m, u_2) \in R^{\times}.$ We conclude that $(m,a)$ is the sum of two units in $R.$
\end{proof}

Keeping the same notation as above, we are now ready to state and prove the following theorem which is a direct generalization of \cite[Theorem 4]{saxena2007parameters}.

\begin{thm} \label{thm:connectitivy_main}
Let $\Phi: R \to R_1/J(R_1)=\F_2^r$ be the quotient ring homomorphism described above. Then $G_{R}(D)$ is connected if and only if the following two conditions hold 
\begin{enumerate}
    \item $\I_1 + \I_2 +\cdots +\I_k = R$;
    \item The cubelike graph $G_{\F_2^r}(D')$ is connected where 
    \[ D' = \{\Phi(\I_1), \Phi(\I_2), \ldots, \Phi(\I_k) \}. \] 
\end{enumerate}
Furthermore, suppose that the above conditions hold. Let $t$ be the smallest value of $t$ such that there exists $1 \leq i_1 < i_2 <\cdots < i_t \leq k$ such that 
\[ \I_{i_1} + \I_{i_2} + \cdots + \I_{i_t} = R.\]
Then 
\[ t \leq {\rm diam}(G_{R}(D)) \leq 2t + {\rm diam} (G_{\F_2^r}(D')).\]
\end{thm}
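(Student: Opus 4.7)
The plan is to establish the four pieces of the statement---necessity of (1) and (2), the lower bound, and sufficiency together with the upper bound---in turn. Necessity of (1) is exactly the argument of \cref{prop:case_G_R_connected}: connectivity forces the generating set $S$ to generate $R$ as an additive group, hence $1 \in \sum_i \I_i$ and therefore $\sum_i \I_i = R$. Necessity of (2) follows from \cref{lem:quotient_property} applied to the quotient map $R \to R/(J(R_1) \times R_2) = R_1/J(R_1) = \F_2^r$. For the lower bound $t \leq \text{diam}(G_R(D))$, fix any walk $0 = a_0, a_1, \ldots, a_d = 1$ in $G_R(D)$ and write each difference $s_\ell = a_\ell - a_{\ell-1}$ as $u_\ell x_{\alpha(\ell)}$ with $u_\ell \in R^\times$ (using \cref{cor:description_of_S}); then $1 \in \sum_\ell \I_{\alpha(\ell)}$, so the distinct indices $\alpha(\ell)$ form a spanning family, and the minimality of $t$ forces $d \geq t$.

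For sufficiency and the upper bound, the approach is to construct, for every $a \in R$, a walk from $0$ to $a$ of length at most $2t + \text{diam}(G_{\F_2^r}(D'))$ in two phases. \emph{Phase 1 (correction modulo $J(R_1) \times R_2$):} use condition (2) to pick a walk of length $L \leq \text{diam}(G_{\F_2^r}(D'))$ in $G_{\F_2^r}(D')$ from $0$ to $\Phi(a)$. Because $(\F_2^r)^\times = \{1\}$, this walk has the form
\[ \Phi(a) = \Phi(x_{j_1}) + \Phi(x_{j_2}) + \cdots + \Phi(x_{j_L}) \]
for some indices $j_1, \ldots, j_L \in \{1, \ldots, k\}$ with $\Phi(x_{j_\ell}) \neq 0$. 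Lift it to $R$ by taking the $L$ successive edges in $G_R(D)$ labelled by the elements $x_{j_\ell} \in S$, landing at $y := \sum_\ell x_{j_\ell}$ and leaving a deficit $a' := a - y$ that lies in $\ker \Phi = J(R_1) \times R_2$.

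\emph{Phase 2 (realizing $a'$ inside $J(R_1) \times R_2$):} pick a minimal family $\{\I_{i_1}, \ldots, \I_{i_t}\}$ summing to $R$ and write $1 = r_1 x_{i_1} + \cdots + r_t x_{i_t}$ for some $r_m \in R$. Multiplying by $a'$ gives
\[ a' = \sum_{m=1}^{t} (a' r_m)\, x_{i_m}. \]
The decisive observation is that each $a' r_m$ again lies in the ideal $J(R_1) \times R_2$, so \cref{cor:sum_two_units} expresses it as $u_{m,1} + u_{m,2}$ with $u_{m,1}, u_{m,2} \in R^\times$, yielding $(a' r_m)\, x_{i_m} = u_{m,1} x_{i_m} + u_{m,2} x_{i_m}$ as a sum of two nonzero elements of $S$ (nonzero because $u_{m,j}$ is a unit and $x_{i_m} \neq 0$). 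The deficit $a'$ is therefore realized in $2t$ further edges of $G_R(D)$, and the concatenated walk has length at most $L + 2t \leq 2t + \text{diam}(G_{\F_2^r}(D'))$, simultaneously establishing connectedness and the upper bound.

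The main obstacle I anticipate is Phase 2. The naive route---first decomposing $a' = v_1 + v_2$ via \cref{cor:sum_two_units} and then distributing the two units using the spanning relation $1 = \sum_m r_m x_{i_m}$---fails because the products $v_j r_m$ need not themselves be units, so the candidates $v_j r_m x_{i_m}$ need not lie in $S$. The correct strategy is to reverse the order: insert the spanning relation first, which pushes the factor $a'$ onto the coefficients $a' r_m$; since $J(R_1) \times R_2$ is an \emph{ideal} and hence closed under multiplication by $r_m$, these coefficients remain in the range where \cref{cor:sum_two_units} applies and finishes the argument.
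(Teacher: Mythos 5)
Your proposal is correct and follows essentially the same route as the paper's proof: necessity via \cref{lem:quotient_property}, then the two-phase construction (lift a shortest walk for $\Phi(a)$ in $G_{\F_2^r}(D')$, then realize the deficit $b \in J(R_1)\times R_2$ by inserting the spanning relation $1=\sum_m r_m x_{i_m}$ \emph{before} applying \cref{cor:sum_two_units} to each coefficient $br_m$). The only additions are welcome ones: you spell out the lower-bound argument that the paper dismisses as ``similar,'' and you make explicit that $J(R_1)\times R_2$ being an ideal is what keeps the coefficients in the range where \cref{cor:sum_two_units} applies.
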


\begin{proof}
By \cref{lem:quotient_property}, we know that $(1)$ and $(2)$ are necessary conditions. We will show that they are sufficient as well. In fact, we will simultaneously show that $G_{R}(D)$ is connected and ${\rm diag}(G_{R}(D)) \leq 2t + {\rm diam} (G_{\F_2^r}(D'))$. We remark that since the only unit in $\F_2^{r}$ is $1$, the generating set for $G_{\F_2}(D')$ is precisely the set  $S'=\{\Phi(x_1), \Phi(x_2), \ldots, \Phi(x_k)\}$ where $\I_k=Rx_k.$ Let $a \in R$ be an arbitrary element in $R.$ We claim that the distance from $a$ to $0$ is at most $2t + {\rm diag} (G_{\F_2^r}(D'))$. In other words, we need to show that $a$ can be written as the sum of at most $2t + {\rm diam} (G_{\F_2^r}(D'))$ elements in $S.$

Since $G_{\F_2}(D')$ is connected, we can write $\Phi(a)= \sum_{i=1}^k n_i \Phi(x_i)$ 
where $n_i \in \{0,1\}$ and $\sum_{i=1}^k n_i \leq {\rm diam}(G_{\F_2^r})(D').$ Let $b:=a-\sum_{i=1}^k n_i x_i \in J(R_1) \times R_2.$ By our assumption $\I_{i_1} + \I_{i_2} + \cdots + \I_{i_t} = R$, we can write $1 = \sum_{m=1}^t a_{i_m} x_{i_m}.$ As a result, we can write $b = \sum_{m=1}^t (ba_{i_m}) x_{i_m}.$ By \cref{cor:sum_two_units}, for each $1 \leq m \leq t$, we can write $(ba_{i_m})$ as a the sum of two units in $R$. This shows that $b$ can be written as a sum of at most $2t$ elements in $S.$ Consequently, $a$ can be written as a sum of at most $2t + {\rm diam} (G_{\F_2^r}(D'))$ elements in $S.$ Since this is true for all $a \in R$, we conclude that 
\[ {\rm diam}(G_{R}(D)) \leq 2t + {\rm diam} (G_{\F_2^r}(D')). \]
The lower bound $t \leq {\rm diam}(G_{R}(D))$ follows from a similar argument. 
\end{proof}

\begin{rem}
    The proof for \cref{thm:connectitivy_main} shows $G_{R}(D)$ and $G_{\F_2^r}(D')$ have the same number of connected components. 
\end{rem}

\begin{rem}
    The estimate in \cref{thm:connectitivy_main} is sharp. For example, the upper bound holds for the graph $G_R(D)$ described in \cref{fig:opitmal} (namely for $R=(\F_3[x]/x^2) \times \F_2$ and $D =\{Rx_1, Rx_2\}$ where $x_1= (1,1)$ and $x_2= (x,0).$) In this case $t=1$ and the diameter of $G_{R}(D)$ is $3 = 2t+1.$
\end{rem}
In the special case where $R=\Z/n$, our theorem recovers the following estimate in \cite[Theorem 4]{saxena2007parameters}. 
\begin{cor}
    Suppose that $r \leq 1.$ Then $G_{R}(D)$ is connected if and only if $\I_1+\I_2 +\cdots+\I_k=R$. Furthermore, let $t$ be the smallest value of $t$ such that there exists $1 \leq i_1 < i_2 <\cdots < i_t \leq k$ such that 
\[ \I_{i_1} + \I_{i_2} + \cdots + \I_{i_t} = R.\]
We then have ${\rm diam}(G_{R}(D)) \leq 2t + 1.$ 
\end{cor}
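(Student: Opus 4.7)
The plan is to deduce this corollary as a direct specialization of \cref{thm:connectitivy_main}. Since the hypothesis on $r \leq 1$ restricts the ``$\F_2$ part'' of $R$ to be either trivial or a single copy of $\F_2$, the cubelike graph $G_{\F_2^r}(D')$ becomes essentially trivial and can be analyzed by hand, collapsing the two conditions of \cref{thm:connectitivy_main} into the single condition $\I_1+\cdots+\I_k=R$.

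First I would rewrite the statement in the framework of \cref{thm:connectitivy_main}: condition $(1)$ is already $\I_1+\cdots+\I_k=R$, and I need to show that under the hypothesis $r\leq 1$, condition $(1)$ alone forces condition $(2)$, and moreover that ${\rm diam}(G_{\F_2^r}(D'))\leq 1$. The corollary's bound ${\rm diam}(G_{R}(D))\leq 2t+1$ then follows immediately from the upper bound $2t+{\rm diam}(G_{\F_2^r}(D'))$ given in \cref{thm:connectitivy_main}.

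Next I would split into the two cases. If $r=0$, then $\F_2^r$ is the zero ring, the graph $G_{\F_2^r}(D')$ consists of a single vertex, it is trivially connected, and its diameter is $0\leq 1$. If $r=1$, then $\F_2^r=\F_2$ has exactly two ideals, $(0)$ and $\F_2$. Applying the surjection $\Phi$ to the identity $\I_1+\cdots+\I_k=R$ gives $\Phi(\I_1)+\cdots+\Phi(\I_k)=\F_2$, which forces at least one $\Phi(\I_i)$ to equal the unit ideal, i.e.\ $\Phi(x_i)=1$. Hence $1\in S'$, the Cayley graph $G_{\F_2}(D')$ has the edge $\{0,1\}$, and it is connected with diameter $1$.

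Combining the two cases, condition $(2)$ of \cref{thm:connectitivy_main} is automatic given $(1)$, and ${\rm diam}(G_{\F_2^r}(D'))\leq 1$. Plugging this into the inequality ${\rm diam}(G_{R}(D))\leq 2t+{\rm diam}(G_{\F_2^r}(D'))$ from \cref{thm:connectitivy_main} yields ${\rm diam}(G_{R}(D))\leq 2t+1$, which is precisely the desired bound. There is no serious obstacle here; the only mildly delicate point is treating $r=0$ (the empty product) consistently with the convention of \cref{lem:quotient_property} for discarding zero images.
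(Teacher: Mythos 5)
Your proposal is correct and follows essentially the same route as the paper: both reduce the corollary to \cref{thm:connectitivy_main} by observing that when $r\leq 1$ and $\I_1+\cdots+\I_k=R$, the quotient graph $G_{\F_2^r}(D')$ is automatically connected with diameter at most $1$, so the bound $2t+{\rm diam}(G_{\F_2^r}(D'))$ becomes $2t+1$. Your explicit case split on $r=0$ versus $r=1$ merely spells out what the paper compresses into the remark that $D'\neq\emptyset$ suffices.
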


\begin{proof}
    If $\I_1+\I_2+\cdots+\I_k=R$ then $D' \neq \emptyset.$ Since $r \leq 1$, this shows that $G_{\F_2^r}(D')$ is connected and its diameter is at most $1.$ \cref{thm:connectitivy_main} then shows that ${\rm diam}(G_{R}(D)) \leq 2t + 1.$
\end{proof}

\begin{rem}
    In \cite{bavsic2024maximal}, the authors determine the maximum diameter in the family of gcd-graphs over $\Z/n$ for fixed $n.$ It would be interesting to study the same problem for gcd-graphs over an arbitrary ring. 
\end{rem}

\section{Spectrum of $G_{R}(D).$} \label{sec:spectrum} 

In this section, we describe the spectrum of a gcd-graph $G_{R}(D)$ over a finite symmetric $\Z/n$-algebra. We remark that some authors refer to this as a  Frobenius ring (see \cite{honold2001characterization} and the references therein for some further studies on this class of rings). As noted in \cite{honold2001characterization}, this concept was independently studied in \cite{lamprecht1953allgemeine} by Lamprecht in his investigations of generalized Gauss sums. As we will see later, it will be evident to the readers that our work on the spectra of gcd-graphs owes much to the pioneering work of Lamprecht. We first recall this definition. 

\begin{definition}
Let $R$ be a finite $\Z/n$-algebra. We say that $R$ is symmetric if there exists an $\Z/n$-linear functional $\psi\colon R \to \Z/n$ such that the kernel of $\psi$ does not contain any non-zero ideal in $R.$ We call such $\psi$ a non-degenerate linear functional on $R$.
\end{definition}
For the rest of this article, we will assume that $R$ is a finite symmetric $\Z/n$-algebra equipped with a fixed linear functional $\psi\colon  R \to \Z/n$.  
Character theory for the additive group structure of $R$ is quite simple. More specifically, by \cite[Propsition 2.3]{nguyen2024integral}, for each character $\widehat{\psi} \in \Hom(R, \C^{\times})$ of $R$, there exists a unique element $r\in R$ such that for all $t \in R$
\[ \widehat{\psi}(t) = \zeta_n^{\psi_r(t)} = \zeta_n^{\psi(rt)}.\]
Here $\zeta_n \in \C$ is a fixed primitive $n$th root of unity. Let us recall the following standard lemma which we will need later on. 
\begin{lem} \label{lem:char_sum_standard}
Let $G$ be a finite abelian group and $\widehat{\psi}\colon G \to \C^{\times}$ be a nontrivial character of $G.$ Then $\sum_{g \in G} \widehat{\psi}(g)=0.$ 
\end{lem}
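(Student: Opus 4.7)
The plan is to use the classical orthogonality trick: exploit the translation invariance of the sum together with the existence of a group element on which $\widehat{\psi}$ does not act trivially. Denote $S := \sum_{g \in G} \widehat{\psi}(g)$.

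Since $\widehat{\psi}$ is nontrivial, I would first pick some $h \in G$ with $\widehat{\psi}(h) \neq 1$; this element is guaranteed to exist by definition of nontriviality. The key idea is then to reindex the sum by the bijection $g \mapsto hg$ of $G$, which gives
\[
S = \sum_{g \in G} \widehat{\psi}(g) = \sum_{g \in G} \widehat{\psi}(hg) = \widehat{\psi}(h) \sum_{g \in G} \widehat{\psi}(g) = \widehat{\psi}(h) \cdot S,
\]
where the second-to-last equality uses that $\widehat{\psi}$ is a homomorphism. Rearranging yields $(1 - \widehat{\psi}(h)) S = 0$, and since $1 - \widehat{\psi}(h) \neq 0$ in $\C$, we conclude $S = 0$.

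There is essentially no obstacle here; the only subtlety worth a sentence is to justify that the reindexing $g \mapsto hg$ really permutes $G$, which follows immediately from the existence of the inverse $h^{-1}$. I would not anticipate needing anything beyond these few lines, and the argument works uniformly for any finite abelian group without reference to the specific structure of $R$ or $\psi$.
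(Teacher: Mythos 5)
Your argument is correct and complete: the translation trick $g \mapsto hg$ with $\widehat{\psi}(h) \neq 1$ is the standard proof of this orthogonality fact. The paper simply states the lemma as standard without proof, so there is nothing to compare against; your write-up supplies exactly the argument the paper implicitly relies on.
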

Let $x \in R$ and $\psi_x\colon  R/\text{Ann}_{R}(x) \to \Z/n$ be the linear functional defined by 
\[ \psi_x(\bar{a}) = \psi(ax), \]
here $\bar{a} \in R/\text{Ann}_{R}(x)$ and $a$ is any lift of $\bar{a}$ in $R.$ We can see that this map is well-defined. Furthermore, we have the following. 

\begin{lem} \label{lem:symmetric-quotient}
$\psi_x$ is a non-degenerate linear functional on $R/{\rm Ann}_{R}(x)$. Consequently, $R/\Ann_{R}(x)$ is a symmetric $\Z/n$-algebra. 
\end{lem}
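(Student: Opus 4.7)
The plan is to unwind the definition of non-degeneracy and reduce it to the non-degeneracy of the original functional $\psi$ on $R$. Suppose $\bar{J}$ is an ideal of $R/{\rm Ann}_R(x)$ that lies in $\ker(\psi_x)$; I want to show $\bar{J} = 0$. Writing $J$ for the preimage of $\bar{J}$ under the quotient map $R \to R/{\rm Ann}_R(x)$, the hypothesis translates into $\psi(ax) = 0$ for every $a \in J$, equivalently $\psi(Jx) = 0$, where $Jx = \{jx : j \in J\}$.

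The next step is to observe that $Jx$ is itself an ideal of $R$. This is immediate: for $r \in R$ and $j \in J$ one has $r(jx) = (rj)x \in Jx$, using commutativity and the fact that $J$ is an ideal of $R$. Hence $Jx$ is a non-zero (at worst) ideal of $R$ contained in $\ker(\psi)$. Since $\psi$ is assumed non-degenerate, the only such ideal is the zero ideal, so $Jx = 0$. This means every element of $J$ annihilates $x$, i.e.\ $J \subseteq {\rm Ann}_R(x)$, and therefore $\bar{J} = J/{\rm Ann}_R(x) = 0$, which is what we wanted.

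Before drawing the conclusion I should also verify the housekeeping that makes $\psi_x$ a legitimate candidate for a non-degenerate functional in the first place: it is well-defined (if $a - a' \in {\rm Ann}_R(x)$ then $(a-a')x = 0$, so $\psi(ax) = \psi(a'x)$) and $\Z/n$-linear (inherited directly from the $\Z/n$-linearity of $\psi$, since multiplication by $x$ is $\Z/n$-linear and the quotient map is too). Once non-degeneracy of $\psi_x$ is established, $R/{\rm Ann}_R(x)$ is by definition a symmetric $\Z/n$-algebra, as claimed.

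I do not expect any real obstacle here; the only subtlety to watch is that $Jx$ is an ideal of $R$ (not merely of some subring) and that it genuinely lies in $\ker(\psi)$, both of which are direct. The proof is essentially a one-line application of non-degeneracy of $\psi$ after correctly identifying $Jx$ as an ideal.
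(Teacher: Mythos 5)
Your proof is correct and follows essentially the same route as the paper: both push the offending ideal of $R/\Ann_R(x)$ forward via multiplication by $x$ to obtain an ideal of $R$ inside $\ker(\psi)$, then invoke non-degeneracy of $\psi$. The only cosmetic difference is that the paper argues by contradiction with a single element $b'$ and the principal ideal $R'b'$, while you work directly with an arbitrary ideal $\bar{J}$; the two are interchangeable.
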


\begin{proof}
Let $R'= R/\Ann_{R}(x)$. Suppose that $\psi_x$ is degenerate. Then, there exists $b' \in R'$ such that $b' \neq 0$ and $R'b' \subset \ker(\psi_x).$ Let $b$ be a lift of $b'$ to $R$. Then, by the definition of $\psi_x$ we have $\psi(Rbx)=0.$ This implies that the $\ker(\psi)$ contains the ideal $Rbx.$ Since $\psi$ is non-degenerate, $bx=0$ and hence $b \in \Ann_{R}(x).$ This would imply that $b' = 0$, which is a contradiction. 
\end{proof}

\begin{rem} \label{rem:construction}
It is not true that if $R$ is a finite symmetric algebra, then $R/\I$ is a symmetric algebra for every ideal $\I$ of $R.$ Let us provide a concrete example (see \cref{cor:quotient_symmetric} for a more general statement). Let $p$ be a prime number. We claim that $R = \F_p[x,y]/(x^2,y^2)$ is a symmetric $\F_p$-algebra. In fact, every element in $r \in R$ can be written in the form 
\[ r = a_0 +a_1x+a_2 y + a_3 xy.\]
We define a linear functional $\psi\colon R \to \F_p$ by $\psi(r)=a_3.$ We can check that this is a non-degenerate $\F_p$-linear functional on $R.$ On the other hand, the quotient $\F_p[x,y]/(x,y)^2$ of $R$ is not a symmetric $\F_p$-algebra. In fact, suppose that $\sigma\colon \F_p[x,y]/(x,y)^2 \to \F_p$ is an $\F_p$-linear functional on $\F_p[x,y]/(x,y)^2.$ If $\sigma(x)=\sigma(y)=0$ then $\ker(\sigma)$ contains the ideal $(x,y).$ Otherwise, $\ker(\sigma)$ contains the non-zero ideal generated by $\sigma(x)y-\sigma(y)x.$ This shows that, in all cases, $\sigma$ is degenerate. 
\end{rem}
We also remark that the construction of the symmetric algebra mentioned in \cref{rem:construction} could be generalized. We have the following observation. 

\begin{prop} \label{prop:construction_symmetric}
    Let $R$ be a finite symmetric $\Z/n$-algebra. Let $f \in R[x]$ be a monic polynomial of degree $n$. Then $R[x]/f$ is also a finite symmetric $\Z/n$-algebra. 
 \end{prop}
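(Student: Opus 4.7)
The plan is to promote $\psi$ to a functional on $S := R[x]/f$ by reading off a distinguished coefficient. Since $f$ is monic of degree $n$, every class in $S$ admits a unique representative $g = \sum_{i=0}^{n-1} a_i x^i$ with $a_i \in R$. I define $\Psi\colon S \to \Z/n$ by
\[
\Psi\!\left(\sum_{i=0}^{n-1} a_i x^i\right) := \psi(a_{n-1}).
\]
That $\Psi$ is $\Z/n$-linear follows immediately from the linearity of $\psi$ and the uniqueness of the reduced representative. The whole content of the proposition is therefore the non-degeneracy of $\Psi$.

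For non-degeneracy, let $\I \subseteq S$ be any non-zero ideal, and choose a non-zero $g \in \I$, written in reduced form $g = \sum_{i=0}^{n-1} a_i x^i$. Let $k$ be the largest index with $a_k \neq 0$. The key observation is a simple degree check: the product
\[
x^{\,n-1-k}\, g \;=\; \sum_{i=0}^{k} a_i\, x^{\,n-1-k+i}
\]
has degree at most $n-1$, so it is already its own reduction modulo $f$; no ``wrap-around'' occurs. Consequently, for any $r \in R$, the element $h := r \cdot x^{\,n-1-k}\, g$ lies in $\I$, is still of degree at most $n-1$, and its coefficient of $x^{n-1}$ is exactly $r a_k$. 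Therefore $\Psi(h) = \psi(r a_k)$.

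To finish, note that $R a_k$ is a non-zero ideal of $R$ because $a_k \neq 0$. By non-degeneracy of $\psi$ on $R$ (applied to this ideal), there exists $r \in R$ with $\psi(r a_k) \neq 0$. The corresponding $h \in \I$ then satisfies $\Psi(h) \neq 0$, so $\I \not\subseteq \ker(\Psi)$. This shows that $\ker(\Psi)$ contains no non-zero ideal of $S$, establishing that $\Psi$ is non-degenerate.

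The only subtle point in the argument is the degree bookkeeping in the second paragraph: one must multiply $g$ by exactly $x^{\,n-1-k}$ (not more) and then by a scalar $r \in R$, so that the product stays below degree $n$ and no reduction by $f$ is triggered. Once that observation is in place, the non-degeneracy of $\Psi$ on $S$ reduces transparently to the non-degeneracy of $\psi$ on the principal ideal $Ra_k \subseteq R$, and the rest is formal.
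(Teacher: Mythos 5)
Your proof is correct and takes the same approach as the paper: the paper defines exactly the same functional $\widehat{\psi}\bigl(\sum_{i=0}^{n-1}a_i x^i\bigr)=\psi(a_{n-1})$ and then asserts non-degeneracy by citing an ``identical argument'' from an external reference. Your shift-by-$x^{\,n-1-k}$ degree argument correctly supplies the details that the paper omits, reducing non-degeneracy of $\widehat{\psi}$ to non-degeneracy of $\psi$ on the non-zero ideal $Ra_k$.
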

\begin{proof}
    Let $\psi\colon R \to \Z/n$ be a non-degenerate $\Z/n$-linear functional on $R.$ We will now define a non-degenerate $\Z/n$-linear functional on $R[x]/f.$ Each element of $R[x]/f$ can be written uniquely as 
    \[ g = a_{n-1}x^{n-1}+\cdots+a_1x+a_0.\]
This shows that $R[x]/f$ is a finite ring of order $|R|^n.$ We define $\widehat{\psi}\colon R[x]/f \to \Z/n$ by the rule $\widehat{\psi}(g) = \psi(a_{n-1}).$ By an identical argument as the proof of \cite[Proposition 6.7]{minavc2024gcd}, we can see that $\widehat{\psi}$ is non-degenerate. We conclude that $R[x]/f$ is a finite symmetric $\Z/n$-algebra. 
\end{proof}

\begin{rem}
    We recall that a Galois ring is a ring of the form $R=\Z[x]/(p^n, f(x)) = (\Z/p^n)[x]/f(x)$ where $f$ is a monic polynomial. \cref{prop:construction_symmetric} shows that Galois rings $  (\Z/p^n)[x]/f(x)$  are finite symmetric $\Z/p^n$-algebras. 
\end{rem}
Another corollary of \cref{prop:construction_symmetric} is the following.
\begin{cor} \label{cor:quotient_symmetric}
    Every finite commutative ring is a quotient of a finite symmetric algebra. 
\end{cor}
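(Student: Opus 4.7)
The plan is to realise $R$ as a homomorphic image of a finite symmetric algebra built by an iterated application of \cref{prop:construction_symmetric}. Since $R$ is finite, the additive order of $1_R$ is some positive integer $N$, and then $N\cdot r = 0$ for every $r\in R$; hence $R$ is naturally a $\Z/N$-algebra. The base of the tower is $R_0 := \Z/N$ itself, which is trivially a finite symmetric $\Z/N$-algebra (take the non-degenerate functional to be the identity map).

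Fix any generating set $a_1,\ldots,a_k$ of $R$ as a $\Z/N$-algebra (in the worst case one may simply let the $a_j$ enumerate all elements of $R$). The elementary but crucial observation is that each $a_j$ is a root of some monic polynomial over $\Z/N$: since $R$ is finite, the sequence of powers $1,a_j,a_j^2,\ldots$ cannot all be distinct, so $a_j^p=a_j^q$ for some $p<q$, and therefore $a_j$ is annihilated by the monic polynomial $x^q-x^p\in (\Z/N)[x]$. Pick such a monic $f_j\in (\Z/N)[x_j]$ for each $j$.

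Now define the tower inductively by $R_j := R_{j-1}[x_j]/f_j(x_j)$, where $f_j$ is regarded as a monic element of $R_{j-1}[x_j]$ via the inclusion $\Z/N\hookrightarrow R_{j-1}$. Applying \cref{prop:construction_symmetric} at each step shows that every $R_j$ is a finite symmetric $\Z/N$-algebra; in particular so is
\[
R_k \;\cong\; (\Z/N)[x_1,\ldots,x_k]\big/\bigl(f_1(x_1),\ldots,f_k(x_k)\bigr).
\]
The universal property of polynomial rings yields a $\Z/N$-algebra homomorphism $R_k \to R$ sending $x_j\mapsto a_j$. It is well-defined because $f_j(a_j)=0$ in $R$ by construction, and it is surjective because the images of the $x_j$ generate $R$. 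Hence $R$ is a quotient of the finite symmetric algebra $R_k$.

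I do not anticipate any genuine obstacle: the statement is essentially a repackaging of \cref{prop:construction_symmetric}. The only minor points to verify are that a monic polynomial in $(\Z/N)[x_j]$ remains monic when viewed in $R_{j-1}[x_j]$, which is immediate since $1\in\Z/N$ maps to $1\in R_{j-1}$, and that the construction of \cref{prop:construction_symmetric} applies to monic polynomials of arbitrary degree (which is clear from the proof given there).
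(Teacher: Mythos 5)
Your proof is correct and follows essentially the same route as the paper: an induction over a finite set of algebra generators, applying \cref{prop:construction_symmetric} at each stage to build a tower of finite symmetric $\Z/N$-algebras surjecting onto $R$. The only mild difference is that you arrange for each annihilating polynomial $f_j$ to have coefficients in $\Z/N$ itself (via $a_j^q = a_j^p$), whereas the paper takes a monic polynomial over the previously adjoined subring and then lifts its coefficients to the symmetric algebra constructed so far; your variant avoids that lifting step but is otherwise the same argument.
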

\begin{proof}
Let $T$ be a finite commutative ring and $n$ the characteristic of $T.$ Since $T$ is finite, there exists $\alpha_1, \alpha_2, \ldots, \alpha_k$ such that $T$ is generated by $\alpha_1, \alpha_2, \ldots, \alpha_k$; namely $T=(\Z/n)[\alpha_1, \alpha_2, \ldots, \alpha_k].$ Let us prove by induction that $(\Z/n)[\alpha_1, \alpha_2, \ldots, \alpha_i]$ is a quotient of a finite symmetric $\Z/n$-algebra for each $0 \leq i \leq k$. If $i=0$ then $T=\Z/n$ which is a symmetric $\Z/n$-algebra. Suppose that $(\Z/n)[\alpha_1, \alpha_2, \ldots, \alpha_i]$ is a quotient of a symmetric $\Z/n$-algebra, say $R.$ We claim that $(\Z/n)[\alpha_1, \alpha_2, \ldots, \alpha_{i+1}]= (\Z/n)[\alpha_1, \alpha_2, \ldots, \alpha_{i}][\alpha_{i+1}]$ is a quotient of a symmetric $\Z/n$-algebra as well. Since $T$ is finite, there exists a monic polynomial $f \in (\Z/n)[\alpha_1, \alpha_2, \ldots, \alpha_{i}][x]$ such that $f(\alpha_{i+1})=0.$ Let $\widehat{f}$ be a lift of $f$ to $R.$ Then we have the following quotient maps 
\[ R[x]/\widehat{f} \to (\Z/n)[\alpha_1, \alpha_2,\ldots, \alpha_i][x]/f \to (\Z/n)[\alpha_1, \alpha_2, \ldots, \alpha_{i}][\alpha_{i+1}].\]
By \cref{prop:construction_symmetric}, $R[x]/\widehat{f}$ is a finite symmetric $\Z/n$- algebra. This shows that the inductive statement holds for $i+1.$ By the principle of mathematical induction, $T$ is a quotient of a finite symmetric $\Z/n$-algebra. 
\end{proof}

We now define generalized Ramanujan sum. 
\begin{definition}
Let $g \in R$.  The generalized Ramanujan sum $c(g, R)$ is defined as follows
\[ c(g,R) = c_{\psi}(g, R) = \sum_{a \in R^{\times}} \zeta_n^{\psi(ga)}. \]
\end{definition}

Our goal is to give an explicit description for $c(g,R)$. In particular, we will show that $c(g,R)$ does not depend on the choice of $\psi$ as long as $\psi$ is non-degenerate.  Similar to the case with Ramanujan sums over $\Z$ as described in \cref{eq:ramanujan_2}, doing so would require some generalization of the M\"obius and Euler totient functions. The definition for the Euler function $\varphi(\I)$ is quite straightforward.

\begin{definition}
Let $T$ be a finite ring. The Euler number of $T$ is defined as 
\[ \varphi(T) = |T^{\times}|, \]
where $T^{\times}$ is the set of invertible elements in $T.$
\end{definition}

The definition of the M\"obius function $\mu(T)$ is a little more complicated. First, we recall that by the structure theorem for Artinian rings, the finite ring $T$ is isomorphic to a finite product of local rings $T \cong \prod_{i=1}^d R_i$.  The following definition is inspired by the classical M\"obius
 function.

\begin{definition}
\[
\mu(T) =
\begin{cases} 
    1, & \text{if } |T|=1, \\ 
    0, & \text{if there exists } 1 \leq i \leq d \text{ such that } R_i \text{ is not a field,} \\
    (-1)^d, & \text{otherwise.}
\end{cases}
\]
\end{definition}

\begin{expl}
Let us consider the case $R$ is a finite quotient of the ring of integers in a global field $K$; i.e, $R= \OO_K/\mathfrak{a}$ where $\mathfrak{a}$ is a non-zero ideal in $\OO_K$.  Suppose that $\mathfrak{a} = \prod_{i=1}^d \mathfrak{p}_i^{e_i}$ is the factorization of $\mathfrak{a}$ into a product of prime ideals, then 
\[ \OO_K/\mathfrak{a}  \cong \prod_{i=1}^d \OO_K/\mathfrak{p}_i^{e_i}.\]
By definition, $\mu(\OO_K/\mathfrak{a})=0$ if there exists $1 \leq i \leq d$ such that $e_i>1.$ Otherwise, $\mu(\OO_K/\mathfrak{a})=(-1)^d.$ With this interpretation, we can see the $\mu(\OO_K/\mathfrak{a})$ is a direct generalization of the classical M\"obius function. 
\end{expl}

We discuss a simple property for the behavior of the Euler and M\"obius functions with respect to direct products. 
\begin{lem} \label{lem:property_mobius}
    Let $R$ be a finite ring. Suppose that $R=R_1 \times R_2$. Then $\mu(R) = \mu(R_1) \mu(R_2)$ and $\varphi(R) = \varphi(R_1) \varphi(R_2).$
\end{lem}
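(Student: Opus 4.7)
The plan is to deduce both identities from the structure theorem for Artinian rings together with the fact that units decompose across direct products. The identity $\varphi(R) = \varphi(R_1)\varphi(R_2)$ is nearly a triviality: an element $(a,b) \in R_1 \times R_2$ is invertible if and only if $a \in R_1^{\times}$ and $b \in R_2^{\times}$, so $R^{\times} = R_1^{\times} \times R_2^{\times}$ as sets, and taking cardinalities yields the claim. I would dispatch this in a single sentence.

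For the M\"obius identity, I would first invoke the structure theorem to write $R_1 \cong \prod_{i=1}^{d_1} A_i$ and $R_2 \cong \prod_{j=1}^{d_2} B_j$ with each $A_i$ and $B_j$ local. Then
\[ R = R_1 \times R_2 \cong A_1 \times \cdots \times A_{d_1} \times B_1 \times \cdots \times B_{d_2} \]
is a product of $d_1 + d_2$ local rings, and by uniqueness of the decomposition into local factors (up to permutation), this is the decomposition used to evaluate $\mu(R)$ in the definition. I would then perform a short case analysis matching the cases in the definition of $\mu$. The trivial cases $|R_1| = 1$ or $|R_2| = 1$ are immediate. If some $A_i$ fails to be a field, then that same non-field local factor appears in the decomposition of $R$, so $\mu(R) = 0$; simultaneously $\mu(R_1) = 0$, so both sides vanish, and the symmetric situation for $R_2$ is identical. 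In the remaining case, every $A_i$ and every $B_j$ is a field, hence every local factor of $R$ is a field, and
\[ \mu(R) = (-1)^{d_1 + d_2} = (-1)^{d_1}(-1)^{d_2} = \mu(R_1)\mu(R_2). \]

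The only nontrivial ingredient is the uniqueness (up to isomorphism and reordering) of the decomposition into local factors, which guarantees that the integer $d$ implicitly used in the definition of $\mu(R)$ equals $d_1 + d_2$; this is standard, as it reduces to the uniqueness of a complete set of primitive idempotents in an Artinian ring. I therefore do not expect any serious obstacle — once the structure theorem is in place, the proof is essentially bookkeeping, and the main care lies in laying out the case split cleanly so that the formula matches on both sides in each case.
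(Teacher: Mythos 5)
Your proof is correct and is essentially the argument the paper has in mind: the paper simply states that both identities ``follow directly from the definition,'' and your write-up supplies exactly the missing bookkeeping (units of a product are products of units, and the local decomposition of $R_1\times R_2$ is the concatenation of those of $R_1$ and $R_2$, with uniqueness of the primitive idempotent decomposition guaranteeing $d=d_1+d_2$). No gaps.
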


\begin{proof}
Both statements follow directly from the definition of $\mu$ and $\varphi.$
\end{proof}

With these preparations, we can now calculate the generalized Ramanujan sum $c(g, R).$ We first calculate the Ramanujan sum $c(g,R)$ when $g=1$. 
\begin{prop} \label{prop:value_at_1} $c(1,R) = \mu(R).$
\end{prop}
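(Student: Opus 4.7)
The plan is to reduce to the case where $R$ is local by using the structure theorem $R \cong \prod_{i=1}^d R_i$, and then handle each local factor by a straightforward character-sum computation.

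First I would exploit multiplicativity. Writing an element of $R$ as $(r_1,\ldots,r_d)$ with $r_i \in R_i$, the functional $\psi$ decomposes as $\psi(r_1,\ldots,r_d) = \sum_i \psi_i(r_i)$, where $\psi_i$ is the restriction of $\psi$ to the $i$th factor (viewed via the canonical embedding). A short argument shows each $\psi_i$ is a non-degenerate $\Z/n$-linear functional on $R_i$: if $\ker(\psi_j)$ contained a non-zero ideal $\idb$ of $R_j$, then $(0,\ldots,\idb,\ldots,0)$ would be a non-zero ideal of $R$ inside $\ker(\psi)$. Since $R^{\times} = \prod_i R_i^{\times}$, the defining sum factors as
\[ c(1,R) = \sum_{(a_1,\ldots,a_d) \in \prod R_i^{\times}} \zeta_n^{\sum_i \psi_i(a_i)} = \prod_{i=1}^d c_{\psi_i}(1,R_i). \]
Combined with \cref{lem:property_mobius}, this reduces the claim to the case where $R=(T,\fr{m})$ is local.

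For the local case I would split $T = T^{\times} \sqcup \fr{m}$ and write
\[ c(1,T) = \sum_{a \in T}\zeta_n^{\psi(a)} - \sum_{a \in \fr{m}}\zeta_n^{\psi(a)}. \]
Since $T$ itself is a non-zero ideal and $\psi$ is non-degenerate, $\psi$ is a non-trivial character of the additive group $T$, so the first sum vanishes by \cref{lem:char_sum_standard}. If $T$ is a field then $\fr{m}=0$ and the second sum equals $1$, giving $c(1,T)=-1=\mu(T)$, consistent with $d=1$ in the multiplicative reduction above. If $T$ is not a field, then $\fr{m}$ is a non-zero ideal of $T$; non-degeneracy of $\psi$ implies $\psi$ is non-trivial on $\fr{m}$, so the second sum also vanishes by \cref{lem:char_sum_standard}, giving $c(1,T)=0=\mu(T)$.

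Putting the two pieces together, $c(1,R) = \prod_i \mu(R_i)$, which equals $(-1)^d$ when every $R_i$ is a field and $0$ otherwise; this matches the definition of $\mu(R)$. The only non-obvious point in the argument is that the non-degeneracy hypothesis descends both to the local factors $R_i$ and to the maximal ideal $\fr{m}$ of a local factor; the former was established above, and the latter is immediate because $\fr{m}$ is itself a non-zero ideal whenever $T$ is not a field. I expect this to be the only subtle step; everything else is direct character arithmetic.
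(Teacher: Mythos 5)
Your proof is correct and follows essentially the same route as the paper: reduce to the local case via the structure theorem and the multiplicativity of $\mu$ and $\varphi$, then evaluate the local sum by splitting $T = T^{\times} \sqcup \mathfrak{m}$ and applying the vanishing of non-trivial character sums, with non-degeneracy guaranteeing non-triviality on $\mathfrak{m}$ when $T$ is not a field. The only cosmetic difference is that you derive the factorization $c(1,R)=\prod_i c_{\psi_i}(1,R_i)$ directly from the additive decomposition of $\psi$, whereas the paper cites Lamprecht's Satz 1 for this step.
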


\begin{proof}
By the structure theorem, $R$ is isomorphic to a product of local rings; i.e., $R \cong \prod_{i=1}^d R_i$, where $R_i$ is a local ring. Let $\psi_i$ be the linear functional on $R_i$ induced by $\psi.$ Since $\psi$ is non-degenrate, $\psi_i$ is non-degenerate as well. Furthermore, by \cite[Satz1]{lamprecht1953allgemeine} we have $c_{\psi}(1, R) = \prod_{i=1}^d c_{\psi_i}(1, R_i).$ Together with \cref{lem:property_mobius} about the multiplicative property of the M\"obius function under direct products, it is sufficient to prove the statement when $R$ is a local ring. Namely, we need to show that if $R$ is a local ring then 
\[ c_{\psi}(1,R) = \begin{cases} 
    0, & \text{if } R \text{ is not a field,} \\
    -1 & \text{otherwise.}
\end{cases}\]
Let $\mathfrak{m}$ be the maximal ideal of $R$ and $\widehat{\psi} = \zeta_n^{\psi}$ be the chacteracter of $R$ associated with $\psi.$ Because $R^{\times} = R \setminus \mathfrak{m}$, we have 
\begin{equation} \label{eq:local_calculation}
c_{\psi}(1,R) = \sum_{a \in R^{\times}} \widehat{\psi}(a) = \sum_{a \in R} \widehat{\psi}(a) - \sum_{a \in \mathfrak{m}} \widehat{\psi}(a). 
\end{equation}
Since $\widehat{\psi}$ is a nontrivial character of $R$, we know that $\sum_{a \in R} \widehat{\psi}(a) =0$. Additionally, because $\mathfrak{m}$ is an additive subgroup of $R$, the restriction of $\widehat{\psi}$ to $\mathfrak{m}$ is a character of $\mathfrak{m}$ (considered as an abstract abelian group). Since $\psi$ is non-degenerate, the restriction of $\widehat{\psi}$ to $\mathfrak{m}$ is a non-trivial character unless $\mathfrak{m}=0.$ By \cref{lem:char_sum_standard}, we conclude that 
\[ \sum_{a \in \mathfrak{m}} \widehat{\psi}(a) = \begin{cases} 
    0, & \text{if } \mathfrak{m} \neq 0 \\
    1 & \text{otherwise.}
\end{cases}\]
By \cref{eq:local_calculation}, we conclude that if $R$ is a local ring then 
\[ c_{\psi}(1,R) = \begin{cases} 
    0, & \text{if } R \text{ is not a field,} \\
    -1 & \text{otherwise.}
\end{cases}\]
\end{proof}

We now consider the general case. 

\begin{thm} \label{prop:evaluate_at_g}
    \[ c(g, R) = \frac{\varphi(R)}{\varphi(R/\Ann_{R}(g))} c(1, R/\Ann_R(g)) = \frac{\varphi(R)}{\varphi(R/\Ann_{R}(g))} \mu(R/\Ann_R(g)).  \]
\end{thm}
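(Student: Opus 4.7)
The plan is to reduce the sum $c(g,R) = \sum_{a \in R^{\times}} \zeta_n^{\psi(ga)}$ to a sum over $(R/\Ann_R(g))^{\times}$ by grouping terms according to their image under the natural map $R^{\times} \to (R/\Ann_R(g))^{\times}$, and then to invoke \cref{prop:value_at_1} applied to the symmetric quotient algebra $R/\Ann_R(g)$ (whose symmetry is guaranteed by \cref{lem:symmetric-quotient}).

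First I would verify that the reduction map $\pi\colon R^{\times} \to (R/\Ann_R(g))^{\times}$ is a well-defined surjective group homomorphism. Well-definedness is immediate since quotienting by an ideal sends units to units. For surjectivity, given $\bar v \in (R/\Ann_R(g))^{\times}$, let $v \in R$ be any lift and let $\bar w$ be the inverse of $\bar v$. Then $vwg = g$, which gives $Rg = R(vg)$. By \cref{lem:kaplansky}, there exists $u \in R^{\times}$ with $ug = vg$, so $\bar u = \bar v$ in $R/\Ann_R(g)$, establishing surjectivity. Consequently, every fiber of $\pi$ has cardinality $\varphi(R)/\varphi(R/\Ann_R(g))$.

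Next I would observe that the function $a \mapsto \zeta_n^{\psi(ga)}$ is constant on fibers of $\pi$: if $\pi(a_1) = \pi(a_2)$ then $a_1 - a_2 \in \Ann_R(g)$, so $ga_1 = ga_2$. Moreover, for any $a \in R$ with image $\bar a \in R/\Ann_R(g)$, we have $\psi(ga) = \psi_g(\bar a)$ by the very definition of $\psi_g$ given right before \cref{lem:symmetric-quotient}. Grouping terms in the defining sum by the fibers of $\pi$ therefore yields
\[ c(g,R) \;=\; \frac{\varphi(R)}{\varphi(R/\Ann_R(g))} \sum_{\bar a \in (R/\Ann_R(g))^{\times}} \zeta_n^{\psi_g(\bar a)} \;=\; \frac{\varphi(R)}{\varphi(R/\Ann_R(g))}\, c_{\psi_g}\!\bigl(1,\, R/\Ann_R(g)\bigr). \]

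Finally, I would apply \cref{prop:value_at_1} to the finite symmetric $\Z/n$-algebra $R/\Ann_R(g)$ equipped with the non-degenerate functional $\psi_g$ (non-degeneracy is exactly \cref{lem:symmetric-quotient}), obtaining $c_{\psi_g}(1, R/\Ann_R(g)) = \mu(R/\Ann_R(g))$. Substituting this into the previous display gives the claimed identity. The main obstacle, and the one substantive ingredient that must be checked with care, is the surjectivity of $\pi$ — without \cref{lem:kaplansky} one has no reason for a lift $v$ of an arbitrary unit $\bar v$ to already be a unit in $R$; everything else is an orbit-counting reorganization and an appeal to the previously established base case $g = 1$.
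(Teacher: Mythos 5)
Your proposal is correct and follows essentially the same route as the paper: group the terms of $c(g,R)=\sum_{a\in R^{\times}}\zeta_n^{\psi(ga)}$ along the fibers of the reduction map $R^{\times}\to (R/\Ann_R(g))^{\times}$, identify the resulting inner sum with $c_{\psi_g}(1,R/\Ann_R(g))$, and apply \cref{lem:symmetric-quotient} together with \cref{prop:value_at_1}. The only difference is that you supply an explicit justification (via \cref{lem:kaplansky}) for the surjectivity of the reduction map on units, a point the paper's proof asserts without argument.
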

\begin{proof}
By definition  $c(g, R) = \sum_{a \in R^{\times}} \zeta_n^{\psi(ga)}.$ 
We remark that if $a-b \in \Ann_{R}(g)$ then $ga=gb$ and hence $\psi(ga)=\psi(gb).$ We also note that since the reduction map $\Phi\colon R^{\times} \to (R/\Ann_R(g))^{\times}$ is a surjective group homomorphism, each $u \in (R/\Ann_R(g))^{\times}$ has exactly $\frac{\varphi(R)}{\varphi(R/\Ann_R(g))}$ lifts to $R^{\times}.$ Therefore 

\[ c(g, R) = \frac{\varphi(R)}{\varphi(R/\Ann_R(g))} \sum_{a \in (R/\Ann_{R}(g))^{\times}} \zeta_n^{\psi(ga)} = \frac{\varphi(R)}{\varphi(R/\Ann_R(g))} \sum_{a \in (R/\Ann_{R}(g))^{\times}} \zeta_n^{\psi_g(a)} . \] 

By \cref{lem:symmetric-quotient}, $\psi_g$ is a non-degenerate linear functional on $R/\Ann_{R}(g).$ By \cref{prop:value_at_1}, we know that 
\[ \sum_{a \in (R/Ann_{R}(g))^{\times}} \zeta_n^{\psi_g(a)}  = \mu(R/\Ann_R(g)).\] 
We conclude that 
    \[ c(g, R) = \frac{\varphi(R)}{\varphi(R/\Ann_{R}(g))} \mu(R/\Ann_R(g)).  
    \qedhere\]
\end{proof}
\begin{lem}
    Let $g,x\in R$ and $R'=R/\Ann_R(x)$. Then $R/\Ann_R(gx)\simeq R'/\Ann_{R'}(g')$, here $g'$ is the image  of $g$ in $R'$. In particular, $c(g',R')=\frac{\varphi(R/\Ann_R(x))}{\varphi(R/\Ann_{R}(gx))} \mu(R/\Ann_R(gx))$.
\end{lem}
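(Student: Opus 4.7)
The plan is to construct an explicit surjective ring homomorphism from $R$ onto $R'/\Ann_{R'}(g')$ whose kernel is exactly $\Ann_R(gx)$, and then invoke the first isomorphism theorem. Let $\pi\colon R \to R' = R/\Ann_R(x)$ be the canonical projection and $q\colon R' \to R'/\Ann_{R'}(g')$ the further quotient map, and set $\Theta = q \circ \pi$. Since both $\pi$ and $q$ are surjective, so is $\Theta$.

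The key step is to identify $\ker(\Theta)$ with $\Ann_R(gx)$. Unwinding definitions, for $a \in R$ we have $a \in \ker(\Theta)$ iff $\pi(a) \in \Ann_{R'}(g')$, iff $\pi(a)\, g' = 0$ in $R'$, iff $ag \in \Ann_R(x)$, iff $agx = 0$, iff $a \in \Ann_R(gx)$. The first isomorphism theorem then yields the isomorphism
\[ R/\Ann_R(gx) \simeq R'/\Ann_{R'}(g'). \]

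For the ``in particular'' part, note that by \cref{lem:symmetric-quotient}, $R'$ carries a non-degenerate linear functional $\psi_x$ and is therefore a finite symmetric $\Z/n$-algebra. Hence \cref{prop:evaluate_at_g} applies to the pair $(g', R')$ and gives
\[ c(g', R') = \frac{\varphi(R')}{\varphi(R'/\Ann_{R'}(g'))}\, \mu(R'/\Ann_{R'}(g')). \]
Substituting $R' = R/\Ann_R(x)$ and using the isomorphism $R'/\Ann_{R'}(g') \simeq R/\Ann_R(gx)$ just established (which preserves both $\varphi$ and $\mu$, since these depend only on the ring up to isomorphism) produces the claimed formula.

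The only subtlety worth flagging is that the kernel computation implicitly chooses lifts (namely $g$ for $g'$ and $a$ for $\pi(a)$); one must check that the condition $\pi(a)\, g' = 0$ in $R'$ is independent of these lifts, which is immediate since $\pi(a)\, g'$ depends only on the cosets $\pi(a)$ and $g' = \pi(g)$. Beyond this bookkeeping, the argument is a straightforward application of standard quotient-ring formalism, and I expect no substantive obstacle.
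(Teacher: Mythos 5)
Your proposal is correct and follows essentially the same route as the paper: compose the two canonical quotient maps $R \to R' \to R'/\Ann_{R'}(g')$, identify the kernel of the composite with $\Ann_R(gx)$ by the same chain of equivalences, and apply the first isomorphism theorem. Your treatment of the ``in particular'' part (invoking \cref{lem:symmetric-quotient} and \cref{prop:evaluate_at_g} and transporting $\varphi$ and $\mu$ along the isomorphism) is a correct filling-in of a step the paper leaves implicit.
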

\begin{proof} Consider the quotient map $\varphi\colon R\to R'/\Ann_{R'}(g')$, which is the composiiton of two quotient maps $R\to R'$ and $R'\to R'/\Ann_{R'}(g')$. For any $r\in R$, $r\in \ker\varphi$ if and only if $g'r'=0\in R'$ (here $r'$ is the image of $r$ in $R'$) if and only if $gr\in \Ann_R(x)$ if and only if $grx=0$ if and only if $r\in \Ann_R(gx)$. Hence $\ker \varphi= \Ann_R(gx)$ and $\varphi$ induces an isomomorphism $R/\Ann_R(gx)\simeq R'/\Ann_{R'}(g')$.
\end{proof}
We can now describe explicitly the spectrum of a gcd-graph. 
\begin{thm} \label{thm:main}
Let $G_{R}(D)$ be a gcd-graph with $D=\{\I_1, \I_2, \ldots, \I_k \}$ and $\I_i = Rx_i$ is a principal ideal. Then, the spectrum of $G_{D}(D)$ is the multiset $\{\lambda_g \}_{g \in R}$  where 
\[ \lambda_g = \sum_{i=1}^k c(g, R/\Ann_{R}(x_i)). \]
Here $c(g, R/\Ann_{R}(x_i))$ is the Ramanujan sum with an explicit formula given in \cref{prop:evaluate_at_g}. More precisely 
\[ c(g, R/\Ann_{R}(x_i)) = \frac{\varphi(R/\Ann_{R}(x_i))}{\varphi(R/\Ann_{R}(gx_i))} \mu(R/\Ann_R(gx_i))\]

\end{thm}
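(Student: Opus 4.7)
The plan is to exploit the fact that $G_R(D)$ is a Cayley graph on the abelian group $(R,+)$, so standard Cayley-graph spectral theory gives its eigenvalues as character sums
$$\lambda_g \;=\; \sum_{s \in S} \widehat{\psi}_g(s) \;=\; \sum_{s \in S}\zeta_n^{\psi(gs)},$$
one for each additive character $\widehat{\psi}_g(t)=\zeta_n^{\psi(gt)}$ of $R$. Because $\psi$ is non-degenerate, the correspondence $g\mapsto \widehat{\psi}_g$ exhausts $\Hom(R,\C^\times)$ (as recalled from \cite[Proposition 2.3]{nguyen2024integral}), so this produces all $|R|$ eigenvalues. The remaining task is to recognize each piece of this character sum as a generalized Ramanujan sum over a quotient algebra, for which the preparatory material of this section is tailor-made.

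First I would decompose the generating set according to the ideal it generates. Since $\I_1,\ldots,\I_k$ are pairwise distinct, each $s\in S$ generates exactly one $\I_i$, yielding the disjoint decomposition $S=\bigsqcup_{i=1}^k S_i$ with $S_i=\{s\in R:Rs=\I_i\}$, and hence
$$\lambda_g \;=\; \sum_{i=1}^k \sum_{s\in S_i} \zeta_n^{\psi(gs)}.$$
Then I would identify the inner sum as a Ramanujan sum over $R_i := R/\Ann_R(x_i)$. By \cref{cor:description_of_S} (which itself rests on \cref{lem:kaplansky}), the assignment $u\mapsto \hat{u}\,x_i$ is a well-defined bijection from $R_i^\times$ onto $S_i$, and the quantity $\psi(g\hat{u}x_i)$ depends only on the class of $\hat{u}$ modulo $\Ann_R(x_i)$. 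Introducing the induced linear functional $\psi_{x_i}$ on $R_i$, which is non-degenerate by \cref{lem:symmetric-quotient}, the inner sum becomes
$$\sum_{u\in R_i^\times} \zeta_n^{\psi_{x_i}(\bar{g}\,u)} \;=\; c_{\psi_{x_i}}(\bar{g},\,R_i),$$
where $\bar{g}$ is the image of $g$ in $R_i$. Summing over $i$ yields the first assertion $\lambda_g = \sum_{i=1}^k c(g, R/\Ann_R(x_i))$.

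Finally, to produce the explicit closed form, I would apply \cref{prop:evaluate_at_g} to the symmetric $\Z/n$-algebra $R_i$ equipped with $\psi_{x_i}$, and then invoke the lemma immediately preceding the theorem statement to rewrite $R_i/\Ann_{R_i}(\bar{g})$ as $R/\Ann_R(gx_i)$. This substitution converts the right-hand side into $\frac{\varphi(R/\Ann_R(x_i))}{\varphi(R/\Ann_R(gx_i))}\,\mu(R/\Ann_R(gx_i))$, matching the formula in the statement. The main obstacle is purely bookkeeping: verifying that the parametrization of $S_i$ by $R_i^\times$ is genuinely a bijection and that the character value $\zeta_n^{\psi(g\cdot)}$ descends consistently from $R$ to $R_i$ via $\psi_{x_i}$. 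Both points are direct consequences of the structural results of \cref{sec:gcd-graphs} together with \cref{lem:symmetric-quotient}, so once the setup is in place the theorem follows by chaining these identifications with \cref{prop:evaluate_at_g}.
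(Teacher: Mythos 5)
Your proposal is correct and follows essentially the same route as the paper: the circulant/Cayley diagonalization gives $\lambda_g=\sum_{s\in S}\zeta_n^{\psi(gs)}$, the generating set is split by the ideal $Rs=\I_i$, the inner sum is reparametrized via \cref{cor:description_of_S} as a sum over $(R/\Ann_R(x_i))^{\times}$ against the induced non-degenerate functional $\psi_{x_i}$ from \cref{lem:symmetric-quotient}, and the closed form then comes from \cref{prop:evaluate_at_g} together with the preceding lemma identifying $R_i/\Ann_{R_i}(\bar g)$ with $R/\Ann_R(gx_i)$. No gaps; your explicit remarks on the bijectivity of $u\mapsto \hat u x_i$ and on which functional the Ramanujan sum is taken with respect to are exactly the bookkeeping the paper's proof relies on.
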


\begin{proof}
Let $S$ be the generating set associated with $D$ as described in \cref{prop:gcd-graph}. By the circulant diagonalization theorem, the spectrum of $G_{R}(D)= \Gamma(R,S)$ is the multiset $\{\lambda_g \}_{g \in R}$ where 
\[ \lambda_g = \sum_{s \in S} \zeta_n^{\psi(gs)} = \sum_{i=1}^k \left[\sum_{s, Rs = \I_i} \zeta_n^{\psi(gs)} \right]. \]

We remark that by \cref{cor:description_of_S}, if $s \in R$ such that $Rs = \I_i =Rx_i$ then $s$ has a unique representation of the form $s = \hat{u}x_i$ where $u \in (R/\Ann_{R}(x_i))^{\times}$ and $\hat{u}$ is a fixed lift of $u$ to $R^{\times}.$ With this presentation, we can write 
\[ \sum_{s, Rs = \I_i} \zeta_n^{\psi(gs)} = \sum_{u \in (R/\Ann_{R}(x_i))^{\times}} \zeta_n^{\psi(gu x_i)} = \sum_{u \in (R/\Ann_{R}(x_i))^{\times}} \zeta_n^{\psi_{x_i}(gu)} = c(g, R/\Ann_{R}(x_i)). \]
Here we recall that $\psi_{x_i}$ is the induced linear functional on $R/\Ann_{R}(x_i).$ We conclude that $\lambda_g = \sum_{i=1}^k c(g, R/\Ann_{R}(x_i)).$
\end{proof}

The following corollary is simple yet important for our future work on perfect state transfers on gcd-graphs.
\begin{cor}
    Suppose that $g'=ug$ for some $u \in R^{\times}.$ Then $\lambda_{g}=\lambda_{g'}.$
\end{cor}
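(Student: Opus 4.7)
The plan is to exploit the defining property of a gcd-graph, namely that the generating set $S$ is stable under multiplication by units. There are two natural routes, and I would present the shorter one first, then mention the second as a sanity check.

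The direct approach is to work with the character-sum expression for $\lambda_g$ coming from the circulant diagonalization theorem, as used in the proof of \cref{thm:main}:
\[ \lambda_g = \sum_{s \in S} \zeta_n^{\psi(gs)}. \]
Writing $g' = ug$ with $u \in R^{\times}$, the key identity is $\psi(g's) = \psi(ugs) = \psi(g \cdot us)$, where the last equality is just commutativity in $R$. Since $\Gamma(R,S)$ is a gcd-graph, $S$ is stable under the action of $R^{\times}$, and in particular the map $s \mapsto us$ is a bijection $S \to S$. Re-indexing the sum by $s' = us$ gives
\[ \lambda_{g'} = \sum_{s \in S} \zeta_n^{\psi(g \cdot us)} = \sum_{s' \in S} \zeta_n^{\psi(g s')} = \lambda_g, \]
which is the claim.

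As a cross-check, I would verify the same fact directly from the explicit formula in \cref{thm:main}. Summing over $i$, it suffices to show $c(g, R/\Ann_R(x_i)) = c(ug, R/\Ann_R(x_i))$ for each $i$. By \cref{prop:evaluate_at_g}, this amounts to showing $\Ann_R(gx_i) = \Ann_R(ugx_i)$, which is immediate: $a \cdot ugx_i = u \cdot a g x_i$ is zero iff $agx_i = 0$ because $u$ is a unit. So both the numerator and the $\mu$-factor in the formula for $c(g, R/\Ann_R(x_i))$ are unchanged when $g$ is replaced by $ug$.

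I do not anticipate any real obstacle here; the statement is essentially a restatement of the fact that $S$ carries an $R^{\times}$-action and the spectrum is a class function for this action. The one thing to be slightly careful about is that the proof uses the symmetry of $R$ only implicitly (via \cref{thm:main}); the character-sum version makes this transparent.
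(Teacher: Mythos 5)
Your proof is correct. The paper in fact gives no proof of this corollary at all (it is stated as an immediate consequence of \cref{thm:main}), and both of your arguments supply the missing details validly: the character-sum re-indexing $s \mapsto us$ using the $R^{\times}$-stability of $S$ is the cleanest route, and the cross-check via $\Ann_R(ugx_i)=\Ann_R(gx_i)$ in the explicit formula of \cref{prop:evaluate_at_g} confirms it.
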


\section*{Acknowledgements}
We thank the Department of Mathematics and Computer Science at Lake Forest College for their generous financial support through an Overleaf subscription. We also thank  J\'an Min\'a\v{c} for his constant encouragement and support.   

\bibliographystyle{amsplain}
\bibliography{references.bib}
\end{document}